\numberwithin{equation}{section}
\newtheorem{theorem}{Theorem}[section]
\newtheorem{corollary}[theorem]{Corollary}
\newtheorem{lemma}[theorem]{Lemma}
\newtheorem{proposition}[theorem]{Proposition}
\theoremstyle{definition}
\newtheorem{definition}[theorem]{Definition}
\newtheorem{assumption}[theorem]{Assumption}
\newtheorem{remark}[theorem]{Remark}
\def\NN{{\mathbb{N}}}
\def\ee{\mathrm{e}}
\def\RR{\mathbb{R}}
\def\CC{\mathbb{C}}
\def\Ell{\mathrm{L}}
\def\C{\mathrm{C}}
\def\W{\mathrm{W}}
\def\vect#1#2{{\textstyle\binom{#1}{#2}}}
\def\bigvect#1#2{{\displaystyle\binom{#1}{#2}}}
\def\dom{D}
\def\ran{\mathop{\mathrm{rg}\mathstrut}}
\def\Re{\mathop{\mathrm{Re}}}
\def\dd{\:\mathrm{d}}
\def\calA{\mathcal{A}}
\def\calA{\mathcal{A}}
\def\calC{\mathcal{C}}
\def\calD{\mathcal{D}}
\def\calE{\mathcal{E}}
\def\calL{\mathcal{L}}
\def\calO{\mathcal{O}}
\def\calP{\mathcal{P}}
\def\calS{\mathcal{S}}
\def\calT{\mathcal{T}}
\def\calE{\mathcal{E}}
\newcommand{\Id}{Id}
\newcommand{\Am}{A_m}
\newcommand{\dE}{\partial E}
\newcommand{\Fav}{\mathrm{Fav}}
\newcommand{\FeA}{{\Fav_1(A)}}
\newcommand{\FnA}{{\Fav_0(A)}}
\newcommand{\Ttt}{(T(t))_{t\ge0}}
\newcommand{\Stt}{(S(t))_{t\ge0}}
\newcommand{\Tme}{T_{-1}}
\newcommand{\Tmett}{(\Tme(t))_{t\ge0}}
\newcommand{\cTt}{\calT(t)}
\newcommand{\cTCt}{\calT_C(t)}
\newcommand{\cTtt}{(\cTt)_{t\geq 0}}
\newcommand{\cTCtt}{(\cTCt)_{t\geq 0}}
\newcommand{\cSt}{\calS(t)}
\newcommand{\cStt}{(\cSt)_{t\geq 0}}
\newcommand{\cTitt}{(\calT_i(t))_{t\geq 0}}
\newcommand{\N}{|\!|\!|}
\newcommand{\blN}{\bigl|\!\bigl|\!\bigl|}
\newcommand{\brN}{\bigr|\!\bigr|\!\bigr|}
\newcommand{\calx}{{\scriptstyle{\mathcal X}}}
\newcommand{\calu}{{\scriptstyle{\mathcal U}}}
\newenvironment{smatrix}{\left(\begin{smallmatrix}}{\end{smallmatrix}\right)}
\begin{document}

\title[Product Formulas for Operator Matrices]{Stability and Convergence of\\ Product Formulas for Operator Matrices}

\author[A. B\'{a}tkai]{Andr\'{a}s B\'{a}tkai}
\address{A. B., E\"{o}tv\"{o}s Lor\'{a}nd University, Institute of Mathematics and Numerical Analysis and Large Networks Research Group, Hungarian Academy of Sciences, 1117 Budapest, P\'{a}zm\'{a}ny P. s\'{e}t\'{a}ny 1/C, Hungary.}
\email{batka@cs.elte.hu}

\author[P. Csom\'{o}s]{Petra Csom\'{o}s}
\address{Universit\"at Innsbruck, Institut f\"ur Mathematik, Technikerstra{\ss}e 13, 6020 Innsbruck, Austria}
\email{petra.csomos@uibk.ac.at}

\author[K.-J. Engel]{Klaus-Jochen Engel}
\address{University of L'Aquila, Department of Pure and Applied Mathematics, via Vetoio 1, 67010 L'Aquila, Italy}
\email{engel@ing.univaq.it}

\author[B. Farkas]{B\'{a}lint Farkas}
\address{E\"{o}tv\"{o}s Lor\'{a}nd University, Institute of Mathematics and Numerical Analysis and Large Networks Research Group, Hungarian Academy of Sciences, 1117 Budapest, P\'{a}zm\'{a}ny P. s\'{e}t\'{a}ny 1/C, Hungary.}
\email{fbalint@cs.elte.hu}

\subjclass{47D06, 47N40, 65J10, 34K06}

\keywords{Operator splitting, Trotter product formula, $C_0$-semigroups, operator matrices, inhomogeneous Cauchy problems, boundary feedback systems}

\date\today
\begin{abstract}
We present easy to verify conditions implying stability estimates for operator matrix splittings which ensure convergence of the associated Trotter, Strang and weighted product formulas. The results are applied to inhomogeneous abstract Cauchy problems and to boundary feedback systems.
\end{abstract}
\maketitle

\section{Introduction}
\noindent
Many systems in physics, biology or engineering can be described by an abstract Cauchy problem of the form
\[\tag{ACP}\label{ACP}
\begin{cases}
\tfrac{\dd}{\dd t}\calu(t)=\calA\,\calu(t)\quad\text{for } t\ge0,\\
\calu(0)=\calu_0
\end{cases}
\]
on a product $\calE=E\times F$ of two Banach spaces $E$ and $F$, see B\'{a}tkai and Piazzera \cite{Batkai-Piazzera}, Engel and Nagel \cite[Chapter VI]{EN:00}, Casarino et al.~\cite{CENN:03}, or Tretter \cite{Tr:08}.  By Engel, Nagel \cite[Section II.6]{EN:00} the problem \eqref{ACP} is well-posed if and only if the system operator $\calA$ is the generator of a strongly continuous semigroup $\cTtt$ on $\calE$. Moreover, in this case the unique (mild) solution $\calu(\cdot)$ of \eqref{ACP} is given by
\[
\calu(t)=\calT(t)\calu_0.
\]
However, in general it is not possible to calculate the entries of
\[
\calT(t)=\bigl(T_{ij}(t)\bigr)_{2\times2}
\]
in terms of $\calA$ in order to obtain an explicit representation of the solution $\calu(\cdot)$. But as we will see below this can be achieved  in case $\calA$ has some special structure, e.g., if $\calA$ is of triangular form. The idea at this point is to split $\calA$ into (a sum of) simpler pieces,  for which it is possible to calculate the associated semigroup and then to use some kind of product formula to reassemble $\calT(t)$ from these pieces. This approach is made more precise in the following result.

\begin{theorem}\label{prop-splitting}
For $i=1,2$ let $\calA_i$ be the generator of the strongly continuous semigroup $(\calT_i(t))_{t\ge0}$  on the Banach space $\calE$. Suppose that $\calA:=\overline{\calA_1+\calA_2}$ is the generator of the strongly continuous semigroup $(\calT(t))_{t\ge0}$. Then the following assertions are true.
\begin{enumerate}[(i)]
\item If there exists $M\geq 1$ and $\omega\in \RR$ such that
\begin{equation}\label{eq:cond_stab_Lie}
\bigl\|\left(\calT_2(\tfrac tn)\calT_1(\tfrac tn)\right)^n\bigr\| \leq M \ee^{\omega t} \quad\text{for all }t\ge0\text{ and } n\in\NN,
\end{equation}
then for all $\calx\in\calE$
\begin{align}
\calT(t)\calx &= \lim_{n\to \infty} \left(\calT_2(\tfrac{t}{n})\calT_1(\tfrac{t}{n})\right)^n \calx, \text{ and}\label{eq:conv_Lie}\\
\calT(t)\calx &= \lim_{n\to \infty} \left(\calT_1(\tfrac{t}{2n})\calT_2(\tfrac{t}{n})\calT_1(\tfrac{t}{2n})\right)^n \calx. \label{eq:conv_Strang}
\end{align}
\item If there exists $M\geq 1$ and $\omega\in\RR$ such that
\begin{equation}\label{eq:cond_stab_weighted}
\left\|\frac{1}{2^n}\bigl(\calT_1(\tfrac tn)\calT_2(\tfrac tn) + \calT_2(\tfrac tn)\calT_1(\tfrac tn)\bigr)^n\right\| \leq M \ee^{\omega t} \quad\text{for all }t\ge0\text{ and } n\in\NN,
\end{equation}
then for all $\calx\in\calE$
\begin{equation}\label{eq:conv_weighted}
\calT(t)\calx = \lim_{n\to \infty} \frac{1}{2^n}\left(\calT_1(\tfrac{t}{n})\calT_2(\tfrac{t}{n}) + \calT_2(\tfrac{t}{n})\calT_1(\tfrac{t}{n})\right)^n\calx.
\end{equation}
\end{enumerate}
\end{theorem}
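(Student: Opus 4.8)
The plan is to deduce all three formulas from a single tool, the classical Chernoff product formula (see, e.g., \cite[Ch.~III]{EN:00}): if $(V(t))_{t\ge0}\subseteq\calL(\calE)$ is strongly continuous with $V(0)=\Id$, if it is stable in the sense that $\|V(\tfrac tn)^n\|\le M\ee^{\omega t}$ for all $t\ge0$ and $n\in\NN$, and if the strong limit $\lim_{h\downarrow0}\tfrac1h(V(h)x-x)$ exists and equals $\calA x$ for all $x$ in a core $\calD$ of the generator $\calA$, then $\calT(t)x=\lim_{n\to\infty}V(\tfrac tn)^nx$ for every $x\in\calE$, uniformly for $t$ in compact intervals. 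Throughout I use that $\calD:=\dom(\calA_1)\cap\dom(\calA_2)=\dom(\calA_1+\calA_2)$ is, by the very definition of $\calA=\overline{\calA_1+\calA_2}$, a core for $\calA$, and that each semigroup satisfies an estimate $\|\calT_i(t)\|\le M_i\ee^{\omega_i t}$.

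For the Lie and the weighted formulas the verification is immediate. Taking $V(t)=\calT_2(t)\calT_1(t)$ (resp.\ $V(t)=\tfrac12(\calT_1(t)\calT_2(t)+\calT_2(t)\calT_1(t))$), one has $V(0)=\Id$, strong continuity is clear, and the required stability estimate is \emph{exactly} the hypothesis \eqref{eq:cond_stab_Lie} (resp.\ \eqref{eq:cond_stab_weighted}), since $\tfrac{1}{2^n}(\cdots)^n=V(\tfrac tn)^n$. For the derivative at $0$ I would split the difference quotient by telescoping, e.g.
\[
\frac{\calT_2(h)\calT_1(h)x-x}{h}=\calT_2(h)\,\frac{\calT_1(h)x-x}{h}+\frac{\calT_2(h)x-x}{h}\xrightarrow[h\downarrow0]{}\calA_1x+\calA_2x
\]
for $x\in\calD$, using $\calT_2(h)\to\Id$ strongly and the local boundedness of $\calT_2(h)$; the weighted case is the average of this and its mirror image and again yields $(\calA_1+\calA_2)x=\calA x$ on $\calD$. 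Chernoff's formula then gives \eqref{eq:conv_Lie} and \eqref{eq:conv_weighted}.

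The genuinely delicate point is the Strang formula \eqref{eq:conv_Strang}, since \eqref{eq:cond_stab_Lie} furnishes stability only of the \emph{Lie} product, whereas Chernoff must be applied to $W(t):=\calT_1(\tfrac t2)\calT_2(t)\calT_1(\tfrac t2)$. The key is the algebraic identity
\[
W(\tfrac tn)^n=\calT_1(\tfrac{t}{2n})\bigl(\calT_2(\tfrac tn)\calT_1(\tfrac tn)\bigr)^{n-1}\calT_2(\tfrac tn)\calT_1(\tfrac{t}{2n}),
\]
obtained by collapsing the adjacent factors $\calT_1(\tfrac{t}{2n})\calT_1(\tfrac{t}{2n})=\calT_1(\tfrac tn)$. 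Bounding the three outer factors by $\|\calT_i(s)\|\le M_i\ee^{\omega_i s}$ and the bracketed power by \eqref{eq:cond_stab_Lie}, the exponents add up to the convex combination
\[
(\omega_1+\omega_2)\tfrac tn+\omega\,\tfrac{n-1}{n}\,t\le\max\{\omega,\,\omega_1+\omega_2\}\,t,
\]
so that $\|W(\tfrac tn)^n\|\le M_1^2M_2M\,\ee^{\max\{\omega,\omega_1+\omega_2\}t}$ for all $t\ge0$ and $n\in\NN$; this is the Chernoff stability of $W$. The derivative at $0$ is again computed by telescoping, $\lim_{h\downarrow0}\tfrac1h(W(h)x-x)=\tfrac12\calA_1x+\calA_2x+\tfrac12\calA_1x=\calA x$ for $x\in\calD$, and Chernoff's formula now delivers \eqref{eq:conv_Strang}.

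I expect the main obstacle to be precisely this transfer of stability from the Lie to the Strang product; once the collapsing identity is in place the exponential bookkeeping reduces to the convex-combination estimate above, and all three convergence statements follow uniformly on compact $t$-intervals. A secondary point demanding care is matching the precise hypotheses of the version of Chernoff's theorem one cites, in particular that $\calD$ be a core for $\calA$ and that the stability be of the power form $\|V(\tfrac tn)^n\|\le M\ee^{\omega t}$ rather than merely $\|V(t)\|\le M\ee^{\omega t}$.
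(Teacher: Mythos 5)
Your proposal is correct and takes essentially the same route as the proofs the paper relies on (Engel--Nagel, Corollary~III.5.8, which is Chernoff's product formula, and the Chernoff-type arguments of Csom\'os--Nickel and B\'atkai--Csom\'os--Nickel): you verify stability, the derivative condition on the core $\dom(\calA_1)\cap\dom(\calA_2)$ of $\calA=\overline{\calA_1+\calA_2}$, and correctly use the equivalence of the power-form bound $\|V(\tfrac tn)^n\|\le M\ee^{\omega t}$ with $\|V(s)^k\|\le M\ee^{\omega ks}$, which the paper itself records just before Theorem~\ref{stability-triangular}. In particular, your collapsing identity deriving Strang stability from \eqref{eq:cond_stab_Lie} is exactly the content of the paper's remark that the Strang splitting is stable precisely when the sequential splitting is, so there is nothing to add.
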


\noindent For the proofs we refer to Engel and Nagel \cite[Corollary~III.5.8]{EN:00}, Csom\'{o}s and Nickel \cite[Section~2]{Csomos-Nickel}, and B\'{a}tkai, Csom\'{o}s and Nickel \cite[Section~4]{Batkai-Csomos-Nickel}.

Product formulas like \eqref{eq:conv_Lie}, \eqref{eq:conv_Strang} and \eqref{eq:conv_weighted} have been applied to approximate the solution of a variety of complicated differential equations and are referred to as ``operator splitting'' in numerical analysis, see for example the monographs by Farag\'{o} and Havasi \cite{Farago-Havasi_book}, Holden et al.~\cite{Holden-Karlsen-Lie-Risebro} or Hundsdorfer and Verwer \cite{Hundsdorfer-Verwer}. The procedure described in Equation \eqref{eq:conv_Lie} is called the Trotter product formula, or sequential splitting. Equation \eqref{eq:conv_Strang} is called the Strang splitting, and Equation \eqref{eq:conv_weighted} is called the (symmetrically) weighted splitting or additive operator splitting. These and many other different procedures have been introduced to increase the order of convergence. In the finite dimensional setting, sequential splitting is of first order, while the other two are of second order. There are many more higher order methods in the literature, see Hairer, Lubich and Wanner \cite[Section III.5.4]{Hairer-Lubich-Wanner}, but we concentrate here on these three main cases since they are the most frequently used ones in applications.

Various generalizations of this procedure are possible but will not be considered in this paper. For non-autonomous versions of these product formulas we refer to B\'{a}tkai et al.~\cite{Batkai-Csomos-Farkas-Nickel1}. For the combined effect of spatial approximation and operator splitting see B\'{a}tkai, Csom\'{o}s and Nickel \cite{Batkai-Csomos-Nickel}, and for the combination of rational approximations, operator splitting and spatial approximation see B\'{a}tkai et al.~\cite{Batkai-Csomos-Farkas-Nickel2}.

The crucial hypothesis to achieve convergence of these splitting procedures are stability conditions like \eqref{eq:cond_stab_Lie} or \eqref{eq:cond_stab_weighted}. In case the semigroups involved are not quasi-contractive, it is in general very difficult to verify these conditions by explicit computations.

The aim of this paper is to address this problem for a special class of triangular matrix operator semigroups, which occur quite frequently in applications.
To this end, in Section \ref{sec:spl_mat} we investigate the stability of the Trotter, Strang and weighted product formulas for triangular operator matrices. To do that we first characterize generators of triangular operator matrix semigroups. Then we analyze the conditions \eqref{eq:cond_stab_Lie} and \eqref{eq:cond_stab_weighted} in the triangular case and give an abstract sufficient condition ensuring them. Finally, we show how extrapolated Favard classes can be used to obtain the desired estimates.
In Section \ref{sec:applications} we consider two classes of applications: Inhomogeneous abstract Cauchy problems and abstract boundary feedback systems.

In what follows we use the term ``semigroup'' to indicate a strongly continuous one-parameter semigroup of bounded linear operators, our main reference on this topic is Engel, Nagel \cite{EN:00}.

\section{Splitting for Operator Matrices}\label{sec:spl_mat}
\noindent
In this section we first characterize generators of triangular matrix semigroups. Then we present conditions implying stability for products of triangular operator matrix semigroups. Finally, we show how our main assumption on the growth of the off-diagonal elements of the matrix semigroup can be verified by the use of Favard classes.

\subsection{Characterization of Triangular Matrix Semigroups}

As mentioned already in the introduction, in general it is not possible to give an explicit matrix representation of a semigroup $\cTtt$ on a product space in terms of the entries of the associated generator. However, things get much simpler if we restrict our attention to matrices of triangular form. In order to characterize this class of operators we associate to an operator
\[\calA:\dom(\calA)\subseteq\calE\to\calE\]
defined on the product space $\calE=E\times F$ the operator $A:\dom(A)\subseteq E\to E$ defined by
\[
Ax:=\pi_1\bigl(\calA \vect x0\bigr)\quad\text{for }
x\in \dom(A):=\left\{z\in E:\vect z0\in \dom(\calA)\right\},
\]
where $\pi_i$ denotes the projection on the $i^{\text{th}}$ coordinate.
Moreover, we denote by $s(\calA)$ the spectral bound of $\calA$. With these notations the following result holds.

\begin{proposition}\label{char-triangular}
Let $\calA$ generate the semigroup $\calT=\cTtt$ on $\calE=E\times F$. Then $\calT$  has upper triangular form, i.e.
\begin{equation}\label{triangular-sgr}
\calT(t)=\begin{pmatrix}
T(t)& R(t)\\
0& S(t)
\end{pmatrix}\quad\text{for all }t\ge0,
\end{equation}
if and only if the following two conditions are satisfied.
\begin{enumerate}[(i)]
\item For all $x\in E$ with $\vect x0\in \dom(\calA)$ we have $\pi_2\bigl(\calA\vect x0\bigr)=0$.
\item There exists $\lambda\in\rho(A)$ satisfying $\Re\lambda>s(\calA)$.
\end{enumerate}
Moreover, in this case $\Ttt$ is a semigroup with generator $A$.
\end{proposition}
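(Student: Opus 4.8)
The plan is to reduce everything to the behaviour of the resolvent $R(\lambda,\calA)$ on the closed subspace $\calE_0:=E\times\{0\}$, using that $\calT$ is upper triangular precisely when $\calE_0$ is $\calT$-invariant. I write $P(\lambda):=\pi_1R(\lambda,\calA)\vect\cdot0\in\calL(E)$ and $L(\lambda):=\pi_2R(\lambda,\calA)\vect\cdot0\in\calL(E,F)$ for the two blocks in the first column of the resolvent; both are analytic on $\rho(\calA)$. For the implication ``$\Rightarrow$'' I assume $\calT$ triangular. The $(1,1)$-block of $\calT(t+s)=\calT(t)\calT(s)$ shows that $\Ttt$ is a $C_0$-semigroup on $E$ with some generator $B$, and differentiating $t\mapsto\calT(t)\vect x0=\vect{T(t)x}0$ at $t=0$ for $\vect x0\in\dom(\calA)$ yields both (i) and $A\subseteq B$. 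For large real $\lambda$ the resolvent is the Laplace transform $\int_0^\infty\ee^{-\lambda t}\calT(t)\dd t$, so $L(\lambda)=0$ and $P(\lambda)=R(\lambda,B)$; once the dictionary below identifies $P(\lambda)=R(\lambda,A)$, equality of resolvents forces $A=B$, and since such $\lambda$ satisfy $\Re\lambda>\omega_0(\calT)\ge s(\calA)$ we also obtain (ii).

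The heart of the matter is a dictionary valid for all $\lambda\in\rho(\calA)$ under hypothesis (i). First, (i) makes $A$ closed: from $x_n\to x$, $Ax_n\to z$ one gets $\calA\vect{x_n}0=\vect{Ax_n}0\to\vect z0$, so closedness of $\calA$ gives $x\in\dom(A)$ and $Ax=z$. Next, $\lambda-A$ is injective, since $(\lambda-A)x=0$ together with (i) gives $\calA\vect x0=\lambda\vect x0$, forcing $\vect x0\in\ker(\lambda-\calA)=\{0\}$. Finally, $\lambda\in\rho(A)\iff L(\lambda)=0$: if $\lambda\in\rho(A)$ then $\vect{R(\lambda,A)y}0\in\dom(\calA)$ and, by (i), $(\lambda-\calA)\vect{R(\lambda,A)y}0=\vect y0$, so $R(\lambda,\calA)\vect y0=\vect{R(\lambda,A)y}0$, giving $L(\lambda)=0$ and $P(\lambda)=R(\lambda,A)$; conversely $L(\lambda)=0$ makes $R(\lambda,\calA)\vect y0=\vect{P(\lambda)y}0\in\dom(\calA)$, and applying $\lambda-\calA$ shows $(\lambda-A)P(\lambda)y=y$, so $\lambda-A$ is onto and $\lambda\in\rho(A)$.

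For ``$\Leftarrow$'' it thus suffices to prove $L\equiv0$ on a right half-plane, for then $\calE_0$ is invariant under $R(\lambda,\calA)$ for all large real $\lambda$, and the Euler formula $\calT(t)x=\lim_n(\tfrac nt R(\tfrac nt,\calA))^nx$ (with $\calE_0$ closed) gives $\calT(t)\calE_0\subseteq\calE_0$, i.e.\ triangularity; the dictionary then yields the ``moreover'' claim exactly as in the necessity part above. To spread the single zero furnished by (ii), let $\lambda_0$ be that point, so $L(\lambda_0)=0$ and $P(\lambda_0)=R(\lambda_0,A)$. Compressing the resolvent identity by $\pi_2$ on the left and $x\mapsto\vect x0$ on the right, and using $R(\lambda_0,\calA)\vect\cdot0=\vect{R(\lambda_0,A)}0$, gives $L(\lambda)=(\lambda_0-\lambda)L(\lambda)R(\lambda_0,A)$, that is $L(\lambda)\bigl(\Id-(\lambda_0-\lambda)R(\lambda_0,A)\bigr)=0$. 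For $|\lambda-\lambda_0|<\|R(\lambda_0,A)\|^{-1}$ the bracket is invertible by a Neumann series, so $L$ vanishes near $\lambda_0$; as $\{\Re\lambda>s(\calA)\}\subseteq\rho(\calA)$ is connected and contains $\lambda_0$, the identity theorem applied to $\langle L(\cdot)x,x^*\rangle$ gives $L\equiv0$ on that half-plane.

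The step I expect to be the main obstacle is precisely this propagation: hypothesis (ii) provides only \emph{one} point of $\rho(A)$, and an isolated zero of an analytic function is globally meaningless without extra structure. That structure is the functional equation $L(\lambda)=(\lambda_0-\lambda)L(\lambda)R(\lambda_0,A)$ coming from the resolvent identity, which upgrades the single zero to an open set of zeros via a Neumann series; and the hypothesis $\Re\lambda_0>s(\calA)$ is exactly what places $\lambda_0$ in the connected half-plane $\{\Re\lambda>s(\calA)\}\subseteq\rho(\calA)$, so that analyticity can carry the vanishing of $L$ to all large real $\lambda$. The remaining ingredients—the semigroup law for $T$, closedness of $A$, and the passage from resolvent invariance to semigroup invariance—are routine.
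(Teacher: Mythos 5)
Your proof is correct and follows essentially the same route as the paper: you pass from the semigroup to the resolvent via the Laplace transform and back via the Euler/Post--Widder formula, use (i) for the same block computations identifying $\pi_1 R(\lambda,\calA)\vect{\cdot}{0}$ with $R(\lambda,A)$ (your ``dictionary'' is the paper's right/left-inverse and kernel arguments in compressed form), and spread the single point from (ii) over the half-plane $\{\Re\lambda>s(\calA)\}$ by a Neumann series plus the identity theorem. The only variation is cosmetic: in the propagation step you invert $\Id-(\lambda_0-\lambda)R(\lambda_0,A)$ on $E$ via the compressed resolvent identity, whereas the paper expands $R(\mu,\calA)=\sum_{n}(\lambda-\mu)^n R(\lambda,\calA)^{n+1}$ and uses that powers of triangular matrices are triangular --- both arguments amount to analytic continuation of the vanishing lower-left entry over the connected component of $\rho(\calA)$ unbounded to the right.
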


\begin{proof}
Note first that, if $\calT(t)$ has upper triangular form \eqref{triangular-sgr}, then the entries $T(t)$ and $S(t)$ form semigroups. Denote their generators by $\widetilde A$ and $B$, respectively. By taking the Laplace transform of $t\mapsto \calT(t)$ we obtain for $\lambda$ large that
$$
R(\lambda,\calA)=\begin{pmatrix}
R(\lambda, \widetilde A)&\star\\
0&R(\lambda,B)
\end{pmatrix}
,
$$
i.e., $R(\lambda,\calA)$ has upper triangular form for $\lambda$ large.
Conversely, if $R(\lambda,\calA)$ has triangular form for sufficiently large $\lambda$, the Post--Widder inversion formula (see Engel and Nagel \cite[Corollary III.5.5]{EN:00}) implies that $\calT(t)$ is upper triangular. Hence $\calT(t)$ is of upper triangular form for all $t\ge0$ if and only if $R(\lambda,\calA)$ is of upper triangular form for all $\lambda$ sufficiently large.
This is further equivalent to the fact that for \textit{some} $\lambda\in\CC$ satisfying $\Re\lambda>s(\calA)$ the resolvent has upper triangular form. To see this we note that for $|\lambda-\mu|<\|R(\lambda,\calA)\|^{-1}$ we have $\mu\in\rho(\calA)$ and
$$
R(\mu,\calA)=\sum_{n=0}^{+\infty}(\lambda-\mu)^nR(\lambda,\calA)^{n+1}.
$$
Here the right-hand side yields matrices of upper triangular form and by holomorphy of the resolvent map we conclude that $R(\mu,\calA)$ is of upper triangular form in the whole connected component of $\rho(\calA)$ which is unbounded to the right. After these
preparations we turn to the proof.

\medskip\noindent
Suppose that $\calT(t)$ has upper triangular form for all $t\geq0$ and take some $\vect x0\in\dom(\calA)$. Then we obtain $\pi_2\bigl(\calT(t)\vect x0-\vect x0\bigr)=0$ and (i) follows by the definition of the generator of a semigroup. To show (ii) we fix $\lambda\in \rho(\calA)$ sufficiently large such that
\[
R(\lambda,\calA)=\begin{pmatrix}
R_1&R_2\\
0&R_4
\end{pmatrix}.
\]
We prove that $R_1$ is the inverse of $\lambda-A$, i.e., $\lambda\in \rho(A)$ and $R_1=R(\lambda,A)$  (which also implies that $A$ is the generator of $(T(t))_{t\ge0}$). Indeed, for an arbitrary $x\in E$, we have
$$
R(\lambda, \calA)\vect x0=\vect{R_1x}{0}\in\dom(\calA),
$$
and hence by definition $R_1x\in \dom(A)$, i.e., $\ran R_1\subseteq \dom(A)$. Moreover,
$$
(\lambda-A)R_1x=\pi_1\Bigl((\lambda-\calA)\vect {R_1x}{0}\Bigr)=\pi_1\Bigl((\lambda-\calA) R(\lambda, \calA)\vect x0\Bigr)=x,
$$
i.e., $R_1$ is the right-inverse of $\lambda-A$. We show that it is also a left-inverse. For $x\in \dom(A)$ we have
\begin{align*}
R_1(\lambda-A)x
&=R_1\pi_1\Bigl((\lambda-\calA)\vect x0\Bigr)=\pi_1\biggl(R(\lambda,\calA) \bigvect{\pi_1\bigl((\lambda-\calA)\vect x0\bigr)}{0}\biggr),
\intertext{which, by validity of (i), further equals to}
&=\pi_1\Bigl(R(\lambda,\calA) (\lambda-\calA)\vect x0\Bigr)=\pi_1\vect x0=x.
\end{align*}
Summing up, $\lambda\in \rho(A)$, hence (ii) is true. Moreover, this implies that $A=\widetilde A$.

\medskip\noindent
Suppose now that (i) and (ii) are satisfied, and fix a $\lambda\in \rho(A)\cap \rho(\calA)$. We have to prove that
$$
R(\lambda,\calA)=\begin{pmatrix}
R_1&R_2\\
R_3&R_4
\end{pmatrix}\quad\mbox{ takes the form }\quad\begin{pmatrix}
R_1&R_2\\
0&R_4
\end{pmatrix},
$$
i.e., $R_3=0$ or, equivalently, $\pi_2\bigl(R(\lambda,\calA)\vect x0\bigr)=0$ for all $x\in E$. Take $x\in E$ and consider the vector
$$
R(\lambda,\calA)\vect x0-\bigvect{R(\lambda,A)x}{0}=\bigvect{R_1x}{R_3x}-\bigvect{R(\lambda,A)x}{0},
$$
which belongs to $\ker(\lambda-\calA)=\{0\}$. Indeed, we have
\[
\pi_1\left((\lambda-\calA)\bigvect{R_1x-R(\lambda,A)x}{R_3x}\right)=\pi_1\vect x0-x=0
\]
and by (i)
\[
\pi_2\left((\lambda-\calA)\bigvect{R_1x-R(\lambda,A)x}{R_3x}\right)=\pi_2\vect x0-\pi_2\vect x0=0.
\]
Hence $R(\lambda, A)x=R_1x$ and $R_3x=0$, and the proof is completed.
\end{proof}

\subsection{Stability Conditions for Matrix Products}

We recall that the underlying idea of our approach is to split a given operator matrix $\calA$ generating a semigroup $\cTtt$ on a product space $\calE$ into a sum $\calA=\calA_1+\calA_2$ of simpler, i.e. triangular, matrices $\calA_i$, $i=1,2$, and then compute $\calT(t)$ using some (e.g. the Trotter) product formula. Here the crucial hypothesis for convergence is a stability condition on the products of the triangular semigroups $\cTitt$, see \eqref{eq:cond_stab_Lie} and \eqref{eq:cond_stab_weighted} in  Theorem~\ref{prop-splitting}.

In this section we will consider three types of such splittings and deduce conditions ensuring that the related stability conditions are satisfied.
We start by considering two operator matrix semigroups of upper triangular form and ask for conditions ensuring that the associated stability condition for the product is satisfied. Let us investigate first the stability condition \eqref{eq:cond_stab_Lie} for the sequential splitting \eqref{eq:conv_Lie} and the Strang splitting \eqref{eq:conv_Strang}.
We remark here that the Strang splitting is  precisely then stable, when  the sequential splitting is. Furthermore, the stability assumption as in  \eqref{eq:cond_stab_Lie} is equivalent to
\begin{equation*}
\bigl\|\left(\calT_2(\tfrac tn)\calT_1(\tfrac tn)\right)^k\bigr\| \leq M \ee^{\omega \frac tn k} \quad\text{for all }t\ge0\text{ and } n,k\in\NN.
\end{equation*}
This is trivially true for all splittings considered in this paper and will be used without further reference (replace $t$ by $\frac {nt}k$ and interchange the roles of $n$ and $k$). The equivalence of the estimates above is even true for more general finite difference schemes, the special splitting structure plays no role here.
\begin{theorem}\label{stability-triangular}
Suppose that for $i=1,2$ the matrix $\calA_i$ generates on $\calE=E\times F$ the semigroup $(\calT_i(t))_{t\geq 0}$ of upper triangular form
$$
\calT_i(t)=
\begin{pmatrix}
T_i(t)& R_i(t)\\
0& S_i(t)
\end{pmatrix}.
$$
If there exist $M'\ge1$, $K>0$ and $\omega'\in\RR$ such that for all $i=1,2$, $t\ge0$ and $n\in\NN$
\begin{enumerate}[(i)]
\item $\|(T_2(\frac tn)T_1(\frac tn))^n\|\leq M'\ee^{\omega' t}$ and  $\|(S_2(\frac tn)S_1(\frac tn))^n\|\leq M'\ee^{\omega't }$,
\item $\|R_i(t)\|\leq K t\cdot \ee^{\omega' t}$,
\end{enumerate}
then there are $M\geq 1$ and $\omega\in\RR$ such that
\begin{equation}\label{eq:stab1}
\bigl\|\bigl(\calT_2(\tfrac tn)\calT_1(\tfrac tn)\bigr)^n\bigr\|\leq M\ee^{\omega t}
\quad\text{for all $t\ge0$ and $n\in\NN$.}
\end{equation}
\end{theorem}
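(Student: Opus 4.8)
The plan is to compute the entries of the upper triangular operator matrix $\bigl(\calT_2(\tfrac tn)\calT_1(\tfrac tn)\bigr)^n$ explicitly and to bound each entry separately, using that on $\calE=E\times F$ the norm of an upper triangular block matrix $\bigl(\begin{smatrix} A & B\\ 0 & D\end{smatrix}\bigr)$ is bounded by $\|A\|+\|B\|+\|D\|$ (up to a constant depending only on the chosen product norm). First I would note that the product of the two triangular semigroups is again triangular: writing $s=\tfrac tn$,
\[
\calT_2(s)\calT_1(s)=\begin{smatrix} T_2(s)T_1(s) & V(s)\\ 0 & S_2(s)S_1(s)\end{smatrix},\qquad V(s):=T_2(s)R_1(s)+R_2(s)S_1(s).
\]
Setting $U(s):=T_2(s)T_1(s)$ and $W(s):=S_2(s)S_1(s)$, an induction on $n$ based on the recursion $X_n=U(s)X_{n-1}+V(s)W(s)^{n-1}$ with $X_1=V(s)$ gives
\[
\bigl(\calT_2(s)\calT_1(s)\bigr)^n=\begin{smatrix} U(s)^n & X_n(s)\\ 0 & W(s)^n\end{smatrix},\qquad X_n(s)=\sum_{k=0}^{n-1}U(s)^k\,V(s)\,W(s)^{n-1-k}.
\]
Thus \eqref{eq:stab1} reduces to uniform exponential bounds for the three blocks $U(\tfrac tn)^n$, $W(\tfrac tn)^n$, and $X_n(\tfrac tn)$.

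The diagonal blocks are handled directly by hypothesis (i), which yields $\|U(\tfrac tn)^n\|\le M'\ee^{\omega' t}$ and $\|W(\tfrac tn)^n\|\le M'\ee^{\omega' t}$. For the off-diagonal block I would combine the two available ingredients. On the one hand, the reformulation of the stability assumption recorded before the theorem gives $\|U(\tfrac tn)^k\|\le M'\ee^{\omega'\frac tn k}$ and $\|W(\tfrac tn)^{n-1-k}\|\le M'\ee^{\omega'\frac tn(n-1-k)}$ for all $0\le k\le n-1$. On the other hand, since the diagonal entries $T_2$ and $S_1$ are themselves $C_0$-semigroups they admit bounds $\|T_2(r)\|,\|S_1(r)\|\le \widetilde M\ee^{\widetilde\omega r}$; together with hypothesis (ii) this yields $\|V(\tfrac tn)\|\le 2K\widetilde M\,\tfrac tn\,\ee^{(\omega'+\widetilde\omega)\frac tn}$, that is $\|V(\tfrac tn)\|\le C\,\tfrac tn\,\ee^{\nu\frac tn}$ for suitable constants $C,\nu$.

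The crucial observation is that in each summand of $X_n$ the exponentials coming from $U^k$ and $W^{n-1-k}$ multiply to $\ee^{\omega'\frac tn(n-1)}$, which no longer depends on $k$. Hence
\[
\|X_n(\tfrac tn)\|\le \sum_{k=0}^{n-1}(M')^2\,\ee^{\omega'\frac tn(n-1)}\,C\,\tfrac tn\,\ee^{\nu\frac tn}=(M')^2 C\,t\,\ee^{\nu\frac tn}\,\ee^{\omega'\frac tn(n-1)},
\]
where the factor $n$ from the sum cancels the $\tfrac1n$ in $\|V\|$. Since $\tfrac tn\le t$ and $\tfrac{n-1}{n}<1$, the remaining exponents are bounded by $(|\nu|+|\omega'|)t$, so $\|X_n(\tfrac tn)\|\le (M')^2C\,t\,\ee^{(|\nu|+|\omega'|)t}$. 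Finally the linear factor $t$ is absorbed into the exponential via $t\,\ee^{\alpha t}\le \tfrac{1}{\beta-\alpha}\,\ee^{\beta t}$ for any $\beta>\alpha$. Collecting the bounds on the three blocks and invoking the norm estimate for triangular block matrices produces constants $M\ge1$ and $\omega\in\RR$ satisfying \eqref{eq:stab1}.

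I expect the only genuine subtlety to be this off-diagonal estimate: the whole argument hinges on the $k$-independent product $\ee^{\omega'\frac tn(n-1)}$ of the two power bounds, which converts a sum of $n$ terms each of order $\tfrac tn$ into a quantity of order $t$, followed by the standard trick of trading the resulting linear factor $t$ against a slightly larger exponential rate. The treatment of the diagonal blocks, the norm estimate for block matrices, and the semigroup bounds on $T_2$ and $S_1$ are routine.
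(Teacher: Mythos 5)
Your proposal is correct and takes essentially the same route as the paper's proof: the same triangular product with off-diagonal entry $T_2(h)R_1(h)+R_2(h)S_1(h)$, the same induction giving $\sum_{k=0}^{n-1}U(\tfrac tn)^k\,V(\tfrac tn)\,W(\tfrac tn)^{n-1-k}$ for the corner block, and the same key step in which the exponentials from the two diagonal powers combine into a $k$-independent factor so that the $n$ summands of order $\tfrac tn$ produce a bound of order $t\,\ee^{\omega' t}$, which is then absorbed into a slightly larger exponential. The only cosmetic differences are that you carry separate growth constants $\widetilde M,\widetilde\omega$ for the semigroups $T_2$ and $S_1$ where the paper enlarges $M',\omega'$ without loss of generality, and that you spell out the block-matrix norm equivalence which the paper leaves implicit.
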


\begin{proof} Since strongly continuous semigroups are exponentially bounded, we can choose $M'\ge1$ and $\omega'\in\RR$ without loss of generality so that
\begin{equation*}
\|T_i(t)\|\leq M'\ee^{\omega' t}, \text{ and } \quad \|S_i(t)\|\leq M'\ee^{\omega' t}
\end{equation*}
are satisfied for $i=1,2$.

\noindent For $h\geq 0$ calculate the product
\begin{align*}
\calT_2(h)\calT_1(h)&=\begin{pmatrix}
T_2(h)&R_2(h)\\0&S_2(h)
\end{pmatrix}\begin{pmatrix}
T_1(h)&R_1(h)\\0&S_1(h)
\end{pmatrix}\\
&=\begin{pmatrix} T_2(h)T_1(h)&T_2(h)R_1(h)+R_2(h)S_1(h)\\ 0 & S_2(h)S_1(h)\end{pmatrix},
\end{align*}
and set $R(h):=T_2(h)R_1(h)+R_2(h)S_1(h)$. This implies
\begin{align*}
\bigl(\calT_2(h)\calT_1(h)\bigr)^2=\begin{pmatrix} \bigl(T_2(h)T_1(h)\bigr)^2&T_2(h)T_1(h)R(h)+R(h)S_2(h)S_1(h)\\ 0 & \bigl(S_2(h)S_1(h)\bigr)^2\end{pmatrix},
\end{align*}
and  by induction one can show that
\begin{equation}\label{eq:power-k}
\bigl(\calT_2(h)\calT_1(h)\bigr)^k =
\begin{pmatrix} \bigl(T_2(h)T_1(h)\bigr)^k & (\star)\\
0 & \bigl(S_2(h)S_1(h)\bigr)^k
\end{pmatrix},
\end{equation}
where
\begin{align}\label{eq:power-k2}
(\star)=\sum_{j=0}^{k-1} \bigl(T_2(h)T_1(h)\bigr)^j R(h)\bigl(S_2(h)S_1(h)\bigr)^{k-1-j}.
\end{align}
In order to prove \eqref{eq:stab1}, we only have to show the exponential estimate for $(\star)$,
the other entries of the product  fulfill such estimates by assumption. Since $\|R(h)\|\le2M'Kh\,\ee^{2\omega' h}$, this implies
\begin{align*}
\|(\star)\|
&\leq \sum_{j=0}^{k-1} \left\|\bigl(T_2(h)T_1(h)\bigr)^j\right\|\cdot\|R(h)\|
  \cdot\left\|\bigl(S_2(h)S_1(h)\bigr)^{k-1-j}\right\|\\
&\leq 2M'^3Kh\sum_{j=0}^{k-1}\ee^{\omega' j h}\ee^{2\omega' h} \ee^{\omega' (k-1-j)h}
=2M'^3Khk\,\ee^{\omega'(k+1)h}.
\end{align*}
If we set $h=\frac tn$ and $k=n$ we get for $M:={2M'}^3K$ and $\omega:=\omega'+|\omega'|+1$
$$
\|(\star)\|\leq Mt\,\ee^{(\omega'+ |\omega'|)t}\leq M\ee^{\omega t}.
$$
This completes the proof.
\end{proof}

\noindent In the same spirit and using analogous calculations, we can investigate the stability condition \eqref{eq:cond_stab_weighted} for the weighted splitting \eqref{eq:conv_weighted}.

\begin{theorem}\label{prop:stability-triangular_weighted}
Suppose that for $i=1,2$ the matrix $\calA_i$ generates on $\calE=E\times F$ the semigroup $(\calT_i(t))_{t\geq 0}$ of upper triangular form
$$
\calT_i(t)=
\begin{pmatrix}
T_i(t)& R_i(t)\\
0& S_i(t)
\end{pmatrix}.
$$
If there exist $M'\ge1$, $K>0$ and $\omega'\in\RR$ such that for all $i=1,2$, $t\ge0$ and $n\in\NN$
\begin{enumerate}[(i)]
\item $\|\frac{1}{2^n}(T_1(\frac tn)T_2(\frac tn)+T_2(\frac tn)T_1(\frac tn))^n\|\leq M'\ee^{\omega' t}$ and \\ $\|\frac{1}{2^n}(S_1(\frac tn)S_2(\frac tn) + S_2(\frac tn)S_1(\frac tn))^n\|\leq M'\ee^{\omega' t}$,
\item $\|R_i(t)\|\leq K t\cdot \ee^{\omega' t}$,
\end{enumerate}
then there are $M\geq 1$ and $\omega\in\RR$ such that
\begin{equation}\label{eq:stab2}
\left\|\frac{1}{2^n}\bigl(\calT_1(\tfrac tn)\calT_2(\tfrac tn) + \calT_2(\tfrac tn)\calT_1(\tfrac tn)\bigr)^n\right\|\leq M\ee^{\omega t}
\end{equation}
for all $t\ge0$ and $n\in\NN$.
\end{theorem}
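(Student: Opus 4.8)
The plan is to follow the proof of Theorem~\ref{stability-triangular} almost verbatim, the only structural input being that the symmetrized combination is again upper triangular. First I would set $P(h):=\tfrac12\bigl(\calT_1(h)\calT_2(h)+\calT_2(h)\calT_1(h)\bigr)$ and compute the two products exactly as in the previous proof. Since products of upper triangular operator matrices are upper triangular, both $\calT_1(h)\calT_2(h)$ and $\calT_2(h)\calT_1(h)$ are upper triangular, and hence so is their average $P(h)$, with diagonal blocks
\[
P_T(h):=\tfrac12\bigl(T_1(h)T_2(h)+T_2(h)T_1(h)\bigr),\qquad P_S(h):=\tfrac12\bigl(S_1(h)S_2(h)+S_2(h)S_1(h)\bigr)
\]
and off-diagonal block $Q(h):=\tfrac12\bigl(T_1(h)R_2(h)+R_1(h)S_2(h)+T_2(h)R_1(h)+R_2(h)S_1(h)\bigr)$.

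Next, because $P(h)$ is block upper triangular, the induction leading to \eqref{eq:power-k} and \eqref{eq:power-k2} applies unchanged (now with $P_T(h)$, $Q(h)$, $P_S(h)$ in the roles of the three blocks) and shows that $P(h)^n$ is upper triangular with diagonal blocks $P_T(h)^n$, $P_S(h)^n$ and off-diagonal block
\[
(\star)=\sum_{j=0}^{n-1}P_T(h)^j\,Q(h)\,P_S(h)^{n-1-j}.
\]
The two diagonal blocks already satisfy the desired exponential bound by hypothesis~(i), used in the rescaled equivalent form recorded before Theorem~\ref{stability-triangular}, i.e. $\|P_T(h)^k\|\le M'\ee^{\omega' kh}$ and likewise for $P_S$. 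Hence, exactly as before, only $(\star)$ requires further work.

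To estimate $(\star)$ I would first bound $Q(h)$. Using the exponential bounds $\|T_i(h)\|,\|S_i(h)\|\le M'\ee^{\omega' h}$ (available without loss of generality, as at the start of the previous proof) together with hypothesis~(ii), each of the four summands of $Q(h)$ is at most $M'Kh\,\ee^{2\omega' h}$, so $\|Q(h)\|\le 2M'Kh\,\ee^{2\omega' h}$. Inserting this and the bounds for $P_T(h)^j$ and $P_S(h)^{n-1-j}$ into $(\star)$, all exponentials collapse to $\ee^{\omega'(n+1)h}$ and the $n$ equal terms give $\|(\star)\|\le 2M'^3Kh\,n\,\ee^{\omega'(n+1)h}$. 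Putting $h=\tfrac tn$ turns the factor $hn$ into $t$ and the exponent into $\omega' t\bigl(1+\tfrac1n\bigr)\le(\omega'+|\omega'|)t$, whence $\|(\star)\|\le 2M'^3Kt\,\ee^{(\omega'+|\omega'|)t}$. This is dominated by $M\ee^{\omega t}$ with, e.g., $M:=2M'^3K$ and $\omega:=\omega'+|\omega'|+1$ (using $t\le\ee^{t}$); combining it with the diagonal estimates from~(i), after enlarging $M$ and $\omega$ if necessary, yields \eqref{eq:stab2}.

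I expect no genuinely new difficulty relative to Theorem~\ref{stability-triangular}; the argument is essentially a line-by-line adaptation. The only two points deserving care are bookkeeping—the off-diagonal block $Q(h)$ now carries four terms instead of two, because we symmetrize over both orderings $\calT_1(h)\calT_2(h)$ and $\calT_2(h)\calT_1(h)$, which merely changes the constant in front of $Kh\,\ee^{2\omega' h}$—and the observation that hypothesis~(i) must be invoked in its rescaled form in order to control the arbitrary powers $P_T(h)^j$ and $P_S(h)^{n-1-j}$ appearing in $(\star)$, not only the single power with $h=t/n$.
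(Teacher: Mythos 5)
Your proof is correct and follows essentially the same route as the paper's: the paper carries the unnormalized sum through the power formula with explicit factors $2^j$, $2^{k-1-j}$ and $2^k$ before dividing by $2^n$ at the end, whereas you absorb the normalization into the averaged operator $P(h)$ from the outset---an equivalent bookkeeping that yields the same constants $M=2M'^3K$ and $\omega=\omega'+|\omega'|+1$. Your invocation of hypothesis~(i) in rescaled form to control the intermediate powers $P_T(h)^j$ and $P_S(h)^{n-1-j}$ is exactly the paper's appeal to the remark preceding Theorem~\ref{stability-triangular}.
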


\begin{proof} Again, since strongly continuous semigroups are exponentially bound\-ed, we can choose $M'\ge1$ and $\omega'\in\RR$ without loss of generality so that
\begin{equation*}
\|T_i(t)\|\leq M'\ee^{\omega' t}, \text{ and } \quad \|S_i(t)\|\leq M'\ee^{\omega' t}
\end{equation*}
are satisfied for $i=1,2$.

\noindent Using the computations of the proof of Theorem \ref{stability-triangular} we obtain for $h\geq 0$  that
\begin{multline*}
\calT_1(h)\calT_2(h)+ \calT_2(h)\calT_1(h)\\
=\begin{pmatrix} T_1(h)T_2(h)+T_2(h)T_1(h)&(\star\star)\\ 0 & S_1(h)S_2(h)+S_2(h)S_1(h)\end{pmatrix},
\end{multline*}
where $(\star\star)=T_1(h)R_2(h)+R_1(h)S_2(h)+T_2(h)R_1(h)+R_2(h)S_1(h)$.

Let $R'(h):=T_1(h)R_2(h)+R_1(h)S_2(h)+T_2(h)R_1(h)+R_2(h)S_1(h)$. Then by induction one can verify the identity
\begin{multline}\label{eq:power-k-w}
\bigl(\calT_1(h)\calT_2(h)+\calT_2(h)\calT_1(h)\bigr)^k  \\
= \begin{pmatrix} \bigl(T_1(h)T_2(h)+T_2(h)T_1(h)\bigr)^k & (\star\star)_k\\
0 & \bigl(S_1(h)S_2(h)+S_2(h)S_1(h)\bigr)^k
\end{pmatrix},
\end{multline}
where
\begin{align}\label{eq:power-k2-w}
(\star\star)_k=\sum_{j=0}^{k-1} \bigl(T_1(h)T_2(h)&+T_2(h)T_1(h)\bigr)^j R'(h)\cdot \notag \\ &\cdot\bigl(S_1(h)S_2(h)+S_2(h)S_1(h)\bigr)^{k-1-j}.
\end{align}
In order to prove \eqref{eq:stab2}, we only have to show the exponential estimate for $(\star\star)_k$,
the other entries of the product  fulfill such estimates by assumption. Since $\|R'(h)\|\le 4M'Kh\,\ee^{2\omega' h}$, this implies
\begin{align*}
\|(\star\star)_k\|
&\leq \sum_{j=0}^{k-1} \left\|\bigl(T_1(h)T_2(h)+T_2(h)T_1(h)\bigr)^j\right\|\cdot\|R'(h)\|\\
&\qquad \cdot\left\|\bigl(S_1(h)S_2(h)+S_2(h)S_1(h)\bigr)^{k-1-j}\right\| \\
&\leq 4M'^3Kh\sum_{j=0}^{k-1}\ee^{\omega' j h} 2^j \ee^{2\omega' h} \ee^{\omega' (k-1-j)h}2^{k-1-j}\\
&=2M'^3Khk\,\ee^{\omega'(k+1)h}2^k.
\end{align*}
If we set $h=\frac tn$ and $k=n$ we get for $M:={2M'}^3K$ and $\omega:=\omega'+|\omega'|+1$
$$
\|(\star\star)_n\|\leq Mt\,\ee^{(\omega' + |\omega'|) t} 2^n \leq M\ee^{\omega t} 2^n.
$$
Combining these estimates, the desired statement \eqref{eq:stab2} follows.
\end{proof}
\noindent Summing up, Theorems \ref{stability-triangular} and \ref{prop:stability-triangular_weighted} show that the stability condition in $(i)$ for the diagonal entries combined with the growth estimate in $(ii)$ imply stability for the matrix products. In the next subsection we will come back to condition $(ii)$.

But first we consider the following stability result for the Trotter, Strang and weighted splitting, which does not make use of a special matrix structure. However, in Subsection~\ref{sec:ABVS} we will apply them in the context of matrix decompositions.

\begin{proposition}\label{stab-A-bdd}
Let $\calA$ generate a semigroup $\cTtt$ on the Banach space $\calE$ and denote by $\cStt$ the semigroup generated by $\calC\in\calL(\calE)$, i.e., $\calS(t)=\ee^{t\calC}$.
Then there exist constants $M\ge1$ and $\omega\in\RR$ such that for all $t\ge0$ and $n\in\NN$
\begin{align*}
\bigl\|\bigl(\calS(\tfrac tn)\calT(\tfrac tn)\bigr)^n\bigr\|&\leq M\ee^{\omega t}
\quad\text{and}\\
\left\|\frac{1}{2^n}\bigl(\calS(\tfrac tn)\calT(\tfrac tn) + \calT(\tfrac tn)\calS(\tfrac tn)\bigr)^n\right\| &\leq M \ee^{\omega t}.
\end{align*}

\end{proposition}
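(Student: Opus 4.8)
The plan is to reduce the statement to the stability Theorems~\ref{stability-triangular} and \ref{prop:stability-triangular_weighted} by embedding the given semigroups into a triangular structure on a product space. The key observation is that $\calC\in\calL(\calE)$ is a bounded operator, so the semigroup $\cStt=(\ee^{t\calC})_{t\ge0}$ is norm-continuous and, crucially, satisfies $\|\calS(t)-\Id\|\le \ee^{\|\calC\|t}-1\le \|\calC\|\,t\,\ee^{\|\calC\|t}$, a Lipschitz-type bound near $t=0$ of exactly the form required by hypothesis (ii) of those theorems. First I would form the product space $\widehat{\calE}:=\calE\times\calE$ and define two upper-triangular matrix semigroups on it,
\[
\widehat{\calT}_1(t):=\begin{pmatrix}\calT(t)&0\\0&\calT(t)\end{pmatrix},\qquad
\widehat{\calT}_2(t):=\begin{pmatrix}\Id&\calS(t)-\Id\\0&\calS(t)\end{pmatrix},
\]
or a similar arrangement chosen so that the relevant off-diagonal block carries the ``$\calS(t)-\Id$'' difference. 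These are genuine semigroups (the first obviously, the second because it is conjugate to a block-diagonal one, or can be checked by direct multiplication) of upper triangular form with bounded off-diagonal generator.

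Next I would verify the two hypotheses of Theorems~\ref{stability-triangular} and \ref{prop:stability-triangular_weighted} for this pair. For hypothesis (i), the diagonal entries of $\widehat{\calT}_1$ and $\widehat{\calT}_2$ are built from $\cTtt$ and the constant/identity-or-$\cStt$ semigroups, and the required products of diagonal blocks are either of the form $(\calT(\tfrac tn))^n=\calT(t)$ (trivially exponentially bounded) or products involving the norm-continuous $\cStt$, which is quasi-contractive and hence satisfies the needed uniform bound directly. For hypothesis (ii), the off-diagonal semigroup entries $R_i(t)$ are either zero or equal to $\calS(t)-\Id$, and the estimate $\|\calS(t)-\Id\|\le \|\calC\|\,t\,\ee^{\|\calC\|t}$ furnishes precisely the $\|R_i(t)\|\le Kt\,\ee^{\omega't}$ bound. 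Once both hypotheses hold, the conclusions \eqref{eq:stab1} and \eqref{eq:stab2} give the uniform bounds for $\bigl(\widehat{\calT}_2(\tfrac tn)\widehat{\calT}_1(\tfrac tn)\bigr)^n$ and for the symmetrized weighted product, and restricting to one diagonal corner of $\widehat{\calE}$ recovers the desired estimates for $\bigl(\calS(\tfrac tn)\calT(\tfrac tn)\bigr)^n$ and its weighted analogue.

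The main obstacle I anticipate is the bookkeeping needed to match the block structure so that the product $\calS(\tfrac tn)\calT(\tfrac tn)$ (and the weighted combination) appears exactly in a corner of the matrix product $\bigl(\widehat{\calT}_2\widehat{\calT}_1\bigr)^n$, rather than some conjugated or permuted version of it. A cleaner alternative, which avoids the embedding entirely, is to argue directly: writing $\calS(\tfrac tn)=\Id+\calB_n$ with $\|\calB_n\|\le \|\calC\|\tfrac tn\,\ee^{\|\calC\|t/n}$, one expands $\bigl(\calS(\tfrac tn)\calT(\tfrac tn)\bigr)^n$ and uses a telescoping/perturbation estimate, controlling the difference from $(\calT(\tfrac tn))^n=\calT(t)$ by a sum of $n$ terms each carrying one factor $\calB_n$ of size $O(\tfrac tn)$, which produces a bound of the form $M\ee^{\omega t}$ uniformly in $n$. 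This direct route essentially reproduces the telescoping computation already carried out in the proofs of Theorems~\ref{stability-triangular} and \ref{prop:stability-triangular_weighted}, so I would likely present it as the shortest path, invoking those proofs as the template and noting that the boundedness of $\calC$ is exactly what supplies the required linear-in-$t$ growth of the perturbation.
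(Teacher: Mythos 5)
Your primary route --- embedding into $\widehat{\calE}=\calE\times\calE$ and invoking Theorems~\ref{stability-triangular} and \ref{prop:stability-triangular_weighted} --- is circular. Compute the product for your choice of blocks: $\widehat{\calT}_2(h)\widehat{\calT}_1(h)=\begin{smatrix}\calT(h)&(\calS(h)-\Id)\calT(h)\\0&\calS(h)\calT(h)\end{smatrix}$, so the lower-right diagonal entries of your pair are $S_1(t)=\calT(t)$ and $S_2(t)=\calS(t)$, and hypothesis (i) of Theorem~\ref{stability-triangular} then demands $\|(S_2(\tfrac tn)S_1(\tfrac tn))^n\|=\|(\calS(\tfrac tn)\calT(\tfrac tn))^n\|\le M'\ee^{\omega' t}$ --- exactly the estimate to be proven. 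Your justification that the diagonal products are ``either $(\calT(\tfrac tn))^n=\calT(t)$ or products involving the quasi-contractive $\cStt$'' overlooks that this corner mixes both semigroups: quasi-contractivity of $\calS$ alone does not control the interleaved product, because the constant $M$ in $\|\calT(t)\|\le M\ee^{\omega t}$ could a priori accumulate to $M^n$ across the $n$ factors, and suppressing that accumulation is the entire content of the proposition. No rearrangement of the blocks escapes this: Theorems~\ref{stability-triangular} and \ref{prop:stability-triangular_weighted} \emph{transfer} stability from the diagonal products to the full matrix, so whichever corner the mixed product occupies, its stability is an input of those theorems, never an output.

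Your fallback direct argument has the right idea but, as written, does not close either. Writing $\calS(\tfrac tn)=\Id+\calB_n$, the deviation of $(\calS(\tfrac tn)\calT(\tfrac tn))^n$ from $\calT(t)$ is \emph{not} a sum of $n$ terms each carrying one factor $\calB_n$: the full expansion has $2^n$ terms with up to $n$ factors $\calB_n$, while the telescoping identity $P^n-Q^n=\sum_{j=0}^{n-1}P^{j}(P-Q)Q^{n-1-j}$ (with $P=\calS(\tfrac tn)\calT(\tfrac tn)$, $Q=\calT(\tfrac tn)$) reintroduces powers $P^{j}$ of the unknown product, so one must close the loop by a discrete Gronwall induction (set $b_k:=\ee^{-\omega kt/n}\|P^k\|$ and derive $b_n\le M(1+cM\tfrac tn)^n\le M\ee^{cMt}$) or by the binomial resummation $(1+M\|\calB_n\|)^n$; moreover, since $\sup_{n}n(\ee^{\|\calC\|t/n}-1)=\ee^{\|\calC\|t}-1$ is not $O(t)$, uniformity in both $t$ and $n$ needs a separate (easy, submultiplicative) treatment of the regime $n\lesssim\|\calC\|t$. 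All of this is repairable, but it is precisely the missing content. The paper sidesteps everything with one idea absent from your proposal: by Engel and Nagel \cite[Lemma~II.3.10]{EN:00} there is an equivalent norm $\N\cdot\N$ on $\calE$ with $\N\calT(t)\N\le\ee^{\omega' t}$; then $\N\calS(t)\N\le\ee^{\N\calC\N t}$, so \emph{every} interleaved product (and the weighted one, up to the factor $2^n$) is bounded by $\ee^{(\omega'+\N\calC\N)t}$ in the new norm, and equivalence of norms supplies the constant $M$. The renorming removes exactly the constant $M$ that obstructs the naive factor-by-factor bound, and it handles both asserted estimates simultaneously in a few lines.
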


\begin{proof} By Engel and Nagel \cite[Lemma II.3.10]{EN:00}, there exists an equivalent norm $\N\cdot\N$ on $\calE$ such that $\cTtt$ is quasi-dissipative for $\N\cdot\N$, i.e., satisfies an estimate
\[
\N\cTt\N\le\ee^{\omega't}\quad\text{for all } t\ge0
\]
and some $\omega'\in\RR$. Moreover,
\[
\N\cSt\N\le\ee^{\N\calC\N t}\quad\text{for all } t\ge0,
\]
where $\N\calC\N$ denotes the operator norm of $\calC\in\calL(\calE)$ induced by $\N\cdot\N$. Since $\|\cdot\|\simeq\N\cdot\N$ there exist $m',M'>0$ such that $m'\|\calx\|\le\N\calx\N\le M'\|\calx\|$ for all $\calx\in\calE$
and hence
\begin{align*}
\left\|\bigl(\calS(\tfrac tn)\calT(\tfrac tn)\bigr)^n\calx\right\|
  &\le\tfrac1{m'}\cdot\blN\bigl(\calS(\tfrac tn)\calT(\tfrac tn)\bigr)^n\calx\brN\\
  &\le\tfrac{1}{m'}\cdot\ee^{\N\calC\N t}\cdot\ee^{\omega't}\cdot\N\calx\N\\
  &\leq \tfrac{M'}{m'}\cdot\ee^{(\N\calC\N+\omega')t}\cdot\|\calx\|
\end{align*}
for all $t\ge0$ and $\calx\in\calE$. This implies the first estimate for $M:=\tfrac{M'}{m'}$ and $\omega:=\omega'+\N\calC\N$. The second estimate follows similarly from
\begin{align*}
\left\|\bigl(\calT(\tfrac tn)\calS(\tfrac tn)+\calS(\tfrac tn)\calT(\tfrac tn)\bigr)^n\calx\right\|
  &\le\tfrac{1}{m'}\cdot\left(2\cdot
  \ee^{\N\calC\N\frac tn}\cdot\ee^{\omega'\frac tn}\right)^n\cdot\N\calx\N\\
  &\leq 2^n\cdot\tfrac{M'}{m'}\cdot\ee^{(\N\calC\N+\omega')t}\cdot\|\calx\|
\end{align*}
for the same constants $M$ and $\omega$ as above.
\end{proof}

\noindent The previous result applies in particular to the splitting
\[
\calA=\calA_0+\calC
\quad\text{where}\quad
\calC=
\begin{pmatrix}
  0&0\\C&0
\end{pmatrix}
\]
for some $C\in\calL(E,F)$, if we assume that $\calA_0$ generates a matrix semigroup on $\calE=E\times F$. In this case the semigroup $\cStt$ generated by $\calC$ is given by
\begin{equation}\label{exp(tC)}
\calS(t)=
\begin{pmatrix}
  \Id&0\\tC&\Id
\end{pmatrix}.
\end{equation}

\subsection{Estimates for Triangular Matrix Semigroups}

As we saw in the previous subsection, cf.~condition~(iii) in Theorems~\ref{stability-triangular} and \ref{prop:stability-triangular_weighted}, in order to obtain the desired stability estimates \eqref{eq:stab1} and \eqref{eq:stab2} we need estimates of the type $\|R_i(t)\|\leq Kt$, $i=1,2$, for the upper right entries $R_i(t)$ of $\calT_i(t)$. In this section we will use an approach based on the concept of Favard classes to achieve this goal.

\medskip Recall from  the proof of Proposition~\ref{char-triangular} that given a matrix semigroup $\cTtt$ of a triangular form \eqref{triangular-sgr}
the diagonal entries $\Ttt$ and $\Stt$ are semigroups on $E$ and $F$, respectively.  If $A$ and $B$ denote their generators, we define the diagonal matrix
\[
\calD=
\begin{pmatrix}
A&0\\
0&B
\end{pmatrix},\quad
\dom(\calD)=\dom(A)\times\dom(B)
\]
which generates the diagonal semigroup $\cStt$ given by
\[
\calS(t)=
\begin{pmatrix}
T(t)&0\\
0& S(t)
\end{pmatrix}.
\]
Moreover, we denote by $(\calS_{-1}(t))_{t\ge0}$ the extrapolated semigroup
\[
\calS_{-1}(t)=
\begin{pmatrix}
T_{-1}(t)&0\\
0& S_{-1}(t)
\end{pmatrix}
\]
with generator
\[
\calD_{-1}=
\begin{pmatrix}
A_{-1}&0\\
0&B_{-1}
\end{pmatrix},\quad
\dom(\calD_{-1})=\dom(A_{-1})\times\dom(B_{-1}),
\]
see Engel and Nagel \cite[Section~II.5]{EN:00}.
For the convenience of the reader we collect here some
facts concerning Favard classes of semigroup generators. A much more detailed account can be found in Engel and Nagel \cite[Section II.5.b]{EN:00}.

\begin{definition}\label{def-fc} Let $\Ttt$ be a strongly continuous semigroup on a
Banach space $E$ with generator $A$. Then we define its \textit{Favard class} (or \textit{space}) as
\[\FeA:=\Bigl\{x\in E:\sup_{t\in(0,1]}t^{-1}\cdot\bigl\|T(t)x-x\bigr\|<\infty\Bigr\}\subset E,\]
which becomes a Banach space with respect to the norm
\[\|x\|_\FeA:=\|x\|+\sup_{t\in(0,1]}t^{-1}\cdot\bigl\|T(t)x-x\bigr\|.\]
\end{definition}

\noindent We note that for reflexive Banach spaces $E$ one always has $\FeA=D(A)$ (see Engel and Nagel \cite[Corollary II.5.21]{EN:00}), hence Favard spaces are interesting only in nonreflexive spaces.

One can define the Favard space $\FnA=\Fav_1(A_{-1})$ for the extrapolated semigroup $\Tmett$ with generator $A_{-1}$ in a similar manner. Using these notations we have the following result.

\begin{proposition}\label{comparison-favard} Let $\cTtt$ be a triangular semigroup of the form \eqref{triangular-sgr} on the product space $\calE=E\times F$ with generator $\calA$.
Then the following assertions are equivalent.
\begin{enumerate}[(a)]
\item There exists $K>0$ such that $\|R(t)\|\le Kt$ for all $t\in[0,1]$.
\item[(a')] There exists $K>0$, $\omega\in\RR$ such that $\|R(t)\|\le Kt\cdot \ee^{\omega t}$ for all $t\ge0$.
\item There exists  $P\in\calL(F,\FnA)$ such that
$\calA=(\calD_{-1}+\calP)|_\calE$ where
\[
\calP=
\begin{pmatrix}
0&P\\0&0
\end{pmatrix}.
\]
\item For some/all $\lambda\in\rho(\calD)$ there exists $D_\lambda\in\calL(F,\FeA)$ such that
\[
\lambda-\calA=
(\lambda-\calD)
\begin{pmatrix}
\Id&-D_\lambda\\0&\Id
\end{pmatrix}.
\]
\end{enumerate}
\end{proposition}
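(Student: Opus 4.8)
The plan is to prove the chain of equivalences by establishing (a) $\Leftrightarrow$ (a'), then (a') $\Leftrightarrow$ (b), then (b) $\Leftrightarrow$ (c). The equivalence (a) $\Leftrightarrow$ (a') is the easy part and relies only on exponential boundedness of the semigroup: given the local bound on $[0,1]$ from (a), one splits a general $t\ge0$ as $t=m+s$ with $m\in\NN_0$, $s\in[0,1)$, uses the triangular structure to express $R(t)$ via the off-diagonal entry of the product formula (as in the computation $R(t)=T(s)R(m)+R(s)S(m)$ iterated, cf.\ the proof of Theorem~\ref{stability-triangular}), and absorbs the polynomial growth in $m$ into an exponential factor $\ee^{\omega t}$. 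Conversely (a') trivially implies (a) on $[0,1]$.

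The core of the argument is the equivalence (a') $\Leftrightarrow$ (b), which identifies the Favard-class bound with a bounded perturbation of the extrapolated diagonal generator. First I would recall from \cite[Section~II.5]{EN:00} that the extrapolated Favard space $\FnA=\Fav_1(A_{-1})$ admits the characterization $\FnA=\{y\in E_{-1}:\sup_{t\in(0,1]}t^{-1}\|T_{-1}(t)y-y\|<\infty\}$, and that membership in $\FnA$ is equivalent to a bound of the form $\|T_{-1}(t)y-y\|\le Ct$. The plan is to write, for $\calA=(\calD_{-1}+\calP)|_\calE$, the variation-of-constants (Duhamel) representation for the off-diagonal block, namely $R(t)=\int_0^t T_{-1}(t-s)\,P\,S(s)\dd s$, where $P\in\calL(F,\FnA)$ is the perturbing operator sending $F$ into the Favard class of the top-left semigroup. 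For (b) $\Rightarrow$ (a') one estimates this integral directly: since $P$ maps into $\FnA$, the extrapolated semigroup $T_{-1}$ acts boundedly on $\ran P\subset\FnA$ with a uniform bound (the Favard space is $T_{-1}$-invariant and the restricted semigroup is bounded on it), yielding $\|R(t)\|\le Kt\,\ee^{\omega t}$. For the converse (a') $\Rightarrow$ (b), one reconstructs $P$ from the generator: the off-diagonal block of $\calA$, read through the Duhamel formula, forces $P$ to be the strong derivative $\tfrac{\dd}{\dd t}R(t)|_{t=0^+}$ in the extrapolated sense, and the bound $\|R(t)\|\le Kt$ is exactly what guarantees that this derivative exists as a bounded operator into $\FnA$ rather than merely into $E_{-1}$.

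Finally, for (c) $\Leftrightarrow$ (b) I would work at the level of resolvents, which is cleaner since both conditions can be tested against a single $\lambda\in\rho(\calD)$. Factoring $\lambda-\calA=(\lambda-\calD)\bigl(\begin{smallmatrix}\Id&-D_\lambda\\0&\Id\end{smallmatrix}\bigr)$ and inverting gives $R(\lambda,\calA)=\bigl(\begin{smallmatrix}\Id&D_\lambda\\0&\Id\end{smallmatrix}\bigr)R(\lambda,\calD)$, whose upper-right entry is precisely $D_\lambda\,R(\lambda,B)$; comparing with the resolvent of the perturbed generator from (b), where $R(\lambda,\calA)$ has upper-right block $R(\lambda,A_{-1})\,P\,R(\lambda,B)$, one reads off the relation $D_\lambda=R(\lambda,A_{-1})P$, so that $P\in\calL(F,\FnA)$ corresponds exactly to $D_\lambda\in\calL(F,\FeA)$ via the resolvent identity $R(\lambda,A_{-1})\FnA=\FeA$ (a standard fact relating the Favard class of $A$ to that of $A_{-1}$ through the resolvent, see \cite[Section~II.5]{EN:00}). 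The "some/all" phrasing is handled by the resolvent equation, which propagates the mapping property from one $\lambda$ to every $\lambda\in\rho(\calD)$.

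I expect the main obstacle to be the careful bookkeeping in the extrapolation: making rigorous that the Duhamel integral $\int_0^t T_{-1}(t-s)PS(s)\dd s$ converges in $E$ (not just in $E_{-1}$) and lands back in $E$, which is exactly where the hypothesis $\ran P\subseteq\FnA$ — rather than merely $\ran P\subseteq E_{-1}$ — is indispensable, since $\FnA\subseteq E$ with the Favard norm controlling the $E_{-1}$-to-$E$ regularization. Correctly tracking the spaces $E$, $E_{-1}$, $\FeA$ and $\FnA$, and invoking the right invariance and boundedness properties of the extrapolated semigroup on the Favard class, is the delicate point; the resolvent computations in (c) $\Leftrightarrow$ (b) are then essentially algebraic once these functional-analytic facts are in place.
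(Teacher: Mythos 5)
Your plan re-derives from scratch what the paper disposes of in three citations --- (a)$\Leftrightarrow$(b) is \cite[Theorem~III.3.9]{EN:00}, (b)$\Leftrightarrow$(c) is \cite[Proposition~III.3.18.(ii)]{EN:00}, and (a)$\Leftrightarrow$(a') follows from exponential boundedness --- and while your (a)$\Leftrightarrow$(a') argument and your resolvent treatment of (b)$\Leftrightarrow$(c) (the factorization of $R(\lambda,\calA)$, the identification $D_\lambda=R(\lambda,A_{-1})P$, the fact that $R(\lambda,A_{-1})$ maps $\FnA$ isomorphically onto $\FeA$, and the some/all propagation via the resolvent equation) are correct, the analytic core contains a genuine error. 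You assert $\FnA\subseteq E$. This is false: the Favard tower runs $D(A)\subseteq\FeA\subseteq E\subseteq\FnA\subseteq E_{-1}$, so the extrapolated Favard class \emph{contains} $E$ and is in general strictly larger. (If $\FnA\subseteq E$ held, then $\calP$ in (b) would be an ordinary bounded perturbation on $\calE$ and the proposition would be nearly trivial; the substance of the Desch--Schappacher/Favard theory is precisely that $P$ may map out of $E$.) The error propagates into your proof of (b)$\Rightarrow$(a'): boundedness of $T_{-1}$ on $\FnA$ in the Favard norm bounds $R(t)x=\int_0^t T_{-1}(t-s)PS(s)x\dd s$ only in the $\FnA$-norm (equivalently, up to constants, the $E_{-1}$-norm), which does \emph{not} dominate the $E$-norm --- the continuous embedding goes the other way. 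What is actually needed is the regularization estimate behind \cite[Theorem~III.3.9]{EN:00}: for $y\in\FnA$,
\[
\Bigl\|\int_0^t T_{-1}(s)\,y\dd s\Bigr\|_E\le Ct\,\|y\|_{\FnA}\quad\text{for }t\in[0,1],
\]
which holds because the integral lies in $D(A_{-1})=E$, its image $A_{-1}\int_0^t T_{-1}(s)y\dd s=T_{-1}(t)y-y$ is $O(t)$ in $E_{-1}$ by the Favard bound, and the graph norm of $A_{-1}$ on $E$ is equivalent to $\|\cdot\|_E$; one then applies this to Riemann-sum approximations of $s\mapsto PS(s)x$ to obtain $\|R(t)x\|_E\le Kt\,\ee^{\omega t}\|x\|$.

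The converse (a')$\Rightarrow$(b) has a second gap: you define $P$ as the strong right derivative of $R(\cdot)$ at $t=0$ in the extrapolated sense and claim that $\|R(t)\|\le Kt$ guarantees its existence. It does not: the bound gives only boundedness of the difference quotients $t^{-1}R(t)y$, and bounded difference quotients need not converge in a nonreflexive space --- the failure of exactly this implication is why $\FeA$ can be strictly larger than $D(A)$, i.e., why Favard classes are a nontrivial notion at all (cf.\ the remark after Definition~\ref{def-fc}). The standard route avoids the derivative: from the cocycle identity $R(t+s)=T(t)R(s)+R(t)S(s)$ and the $Kt$-bound one first proves (c), showing that the upper-right entry $Q_\lambda=\int_0^\infty\ee^{-\lambda t}R(t)\dd t$ of $R(\lambda,\calA)$ factors as $Q_\lambda=D_\lambda R(\lambda,B)$ with $D_\lambda\in\calL(F,\FeA)$, and only then sets $P:=(\lambda-A_{-1})D_\lambda\in\calL(F,\FnA)$; this is the content of the Engel--Nagel results the paper invokes rather than reproves. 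So keep your (a)$\Leftrightarrow$(a') and your (b)$\Leftrightarrow$(c) resolvent algebra, but replace the claimed inclusion $\FnA\subseteq E$ by the regularization lemma above, and replace the derivative construction of $P$ by the Laplace-transform construction of $D_\lambda$ --- or simply cite \cite[Theorem~III.3.9 and Proposition~III.3.18]{EN:00}, as the paper does.
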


\begin{proof}
The equivalence of (a) and (b) follows from Engel and Nagel \cite[Theorem III.3.9]{EN:00}, while (b) and (c) are equivalent by \cite[Proposition III.3.18.(ii)]{EN:00}. Finally, (a) and (a') are equivalent since every strongly continuous semigroup is exponentially bounded.
\end{proof}

\section{Applications}\label{sec:applications}

\noindent
In this section we will show how our abstract results apply to inhomogeneous Cauchy problems as well as to systems with boundary feedback.

\subsection{Inhomogeneous Abstract Cauchy Problems}

Consider the inhomogeneous Cauchy problem
\[\tag{iACP}\label{eq:acpspl_inhom}
\begin{cases}
\frac{\dd }{\dd t}\, u(t)=Au(t)+f(t)\quad\text{for } t\ge0,\\
u(0)=u_0.
\end{cases}
\]
for a linear operator $(A,D(A))$ on a Banach space $E$. For operator splitting methods applied to this problem, see Bj\o rhus \cite{Bjorhus} and Ostermann and Schratz \cite{Ostermann-Schratz}.

A standard method to tackle this problem is to rewrite it as a homogeneous one like \eqref{ACP} in the product space $\calE:=E\times F(\RR_+;E)$ for the operator matrix
\begin{equation*}
\calA=\begin{pmatrix}
A & \delta_0 \\
0 & \frac{\dd}{\dd s}
\end{pmatrix}\quad\mbox{with diagonal domain}\quad\dom(\calA)=\dom(A)\times F_1(\RR_+;E).
\end{equation*}
Here $F(\RR_+;E)$ denotes a space of $E$-valued functions defined on $\RR_+$ on which the left-shift semigroup $(L(t))_{t\geq 0}$ is strongly continuous. Moreover, $\frac{\dd}{\dd s}$ with domain $F_1(\RR_+;E)$ denotes the generator of $(L(t))_{t\geq 0}$, and $\delta_0(f):=f(0)$ is the point evaluation at $0$. The main choices for $F:=F(\RR_+;E)$ are $F=\C_0(\RR_+;E)$ which implies $F_1(\RR_+;E)=C_0^1(\RR_+;E)$ or $F=\Ell^1(\RR_+;E)$ for which $F_1(\RR_+;E)=\W^{1,1}(\RR_+;E)$ follows.
Then the inhomogeneous equation \eqref{eq:acpspl_inhom} is equivalent to the abstract Cauchy problem
\begin{equation*}
\begin{cases}
\tfrac {\dd }{\dd t}\calu(t)=\calA\,\calu(t)\quad\text{for }t\ge0,\\
\calu(0)=\vect{u_0}{f}.
\end{cases}
\end{equation*}
For the details we refer to Engel and Nagel \cite[Section VI.7]{EN:00}. Here we only mention that in both cases $\calA$ generates a strongly continuous semigroup on $\calE$. In the $\C_0$-case this easily follows by bounded perturbation (see below) while in the ${\Ell^1}$-case this is shown in \cite[Proposition VI.7.5]{EN:00}.

\subsubsection{Stability in ${F=\C_0(\RR_+;E)}$}
To see that the operator matrix $\calA$ is actually a generator in case $F=\C_0(\RR_+;E)$, note that
\begin{equation*}
\calA_0=\begin{pmatrix}
A & 0 \\
0 & \frac{\dd}{\dd s}
\end{pmatrix}\quad\mbox{with diagonal domain}\quad\dom(\calA_0)=\dom(A)\times D(\tfrac{\dd}{\dd s})
\end{equation*}
for $D(\frac{\dd}{\dd s})=\C_0^1(\RR_+;E)$ generates the semigroup
\begin{equation*}
\calT_0(t):=\begin{pmatrix}
T(t) & 0 \\
0 & L(t)
\end{pmatrix},
\end{equation*}
where $(T(t))_{t\geq 0}$ is the semigroup generated by $A$.
Since $\delta_0:F\to E$ is bounded, $\calA$ is a bounded perturbation of $\calA_0$, hence it is a generator. To get a formula for the semigroup $(\calT(t))_{t\geq 0}$ generated by $\calA$, note that by Proposition~\ref{char-triangular} the semigroup $(\calT(t))_{t\geq0}$ must be upper triangular, say
$$
 \calT(t)=\begin{pmatrix}
 T_1(t)&T_2(t)\\
 0&T_3(t)
 \end{pmatrix}.
$$
By the variation of constants formula (see e.g., Engel and Nagel \cite[Section III.1]{EN:00}) we obtain
\begin{align*}
\calT(t)\vect{x}{f}&=\calT_0(t)\vect{x}{f}+\int_0^t \calT(t-s)\begin{smatrix}0&\delta_0\\0&0\end{smatrix}\calT_0(s)\vect{x}{f}\dd s\\
&=\vect{T(t)x}{L(t)f}+\int_0^t \vect{T_1(t-s)f(s)}{0}\dd s.
\end{align*}
If we take $f=0$, we get $T_1(t)=T(t)$, and hence for all $f\in F$ we have
$$
T_2(t)f=\int_0^t T(t-s)f(s)\dd s.
$$
Moreover, $T_3(t)=L(t)$.

Now we want to apply the sequential splitting to the problem
\begin{equation}\label{eq:inhom_beforesplit}
\begin{cases}
\frac{\dd }{\dd t} u(t)=(A_1+A_2)u(t)+(f_1+f_2)(t)\quad\text{for }t\ge0,\\
u(0)=u_0,
\end{cases}
\end{equation}
where we have written the inhomogeneity already in a form corresponding to the splitting procedure. Namely, choosing a time step $h=\frac{t}{n}$, we first solve the equation
\begin{equation*}
\begin{cases}
\frac{\dd }{\dd t} v(h)=A_1v(h)+f_1(h),\\
v(0)=u_0,
\end{cases}
\end{equation*}
then using the result we solve the equation
\begin{equation*}
\begin{cases}
\frac{\dd }{\dd t} w(h)=A_2 w(h)+f_2(h),\\
w(0)=v(h).
\end{cases}
\end{equation*}
Setting $u_h=w(h)$, we repeat this procedure $n$ times and call $u_{nh}$ the (sequential) split solution corresponding to the equation \eqref{eq:inhom_beforesplit}.

Clearly, by the preparations in the beginning of this section, we can reformulate \eqref{eq:inhom_beforesplit} as a homogeneous abstract Cauchy problem
\begin{equation*}
\begin{cases}
\frac{\dd}{\dd t}\calu(t)=(\calA_1+\calA_2)\,\calu(t)\quad\text{for }t\ge0,\\
\calu(0)=\left(\begin{smallmatrix}{u_0}\\{f_1}\\{f_2}\end{smallmatrix}\right)
\end{cases}
\end{equation*}
on the product space
\[
\calE=E\times F\times F,
\]
for $F=\C_0(\RR_+;E)$ and the operators
\begin{equation}\label{eq:decomp-A-C0}
\begin{aligned}
\calA_1&:=\begin{pmatrix}
A_1 & \delta_0 & 0 \\
0 & \frac{\dd}{\dd s} & 0 \\
0 & 0 & 0
\end{pmatrix}, \quad \dom(\calA_1)=\dom(A_1)\times D(\tfrac{\dd}{\dd s}) \times F,\\
\calA_2&:=
\begin{pmatrix}A_2 & 0 & \delta_0 \\
0 & 0 & 0 \\
0 & 0 & \frac{\dd}{\dd s}
\end{pmatrix}, \quad \dom(\calA_2)=\dom(A_2)\times F\times D(\tfrac{\dd}{\dd s})
\end{aligned}
\end{equation}
for $D(\frac{\dd}{\dd s})=\C_0^1(\RR_+;E)$.
By the above, the semigroups generated by these operators take the form
\begin{equation}\label{eq:Tt+St}
\mathcal{T}_1(t)=
\left(
\begin{array}{ccc}
T_1(t) & Q_1(t) & 0 \\
0 & L(t) & 0 \\
0 & 0 & I
\end{array}
\right), \qquad
\mathcal{T}_2(t)=
\left(
\begin{array}{ccc}
T_2(t) & 0 & Q_2(t) \\
0 & I & 0 \\
0 & 0 & L(t)
\end{array}
\right),
\end{equation}
where $(T_1(t))_{t\geq 0}$ and $(T_2(t))_{t\geq0}$ denote the semigroups generated by $A_1$ and $A_2$, respectively, $(L(t))_{t\geq0}$ is the left-shift on $\C_0(\RR_+;E)$, and
$$
Q_i(t)f=\int_0^t T_i(t-s)f_i(s)\dd s\quad \text{for } i=1,2\text{ and }f\in F.
$$
Note that with this notation, the sequential splitting is given by the Trotter product formula
$$
u_{nh} = \pi_1 \left(\mathcal{T}_2(h)\mathcal{T}_1(h)\right)^n \left(\begin{smallmatrix}{u_0}\\{f_1}\\{f_2}\end{smallmatrix}\right).
$$
The next result establishes the stability condition \eqref{eq:cond_stab_Lie}, and hence the convergence for the Trotter and Strang product formulas with respect to the splitting $\calA=\calA_1+\calA_2$ for $\calA_1$ and $\calA_2$ defined by \eqref{eq:decomp-A-C0}.

\begin{proposition}\label{t:stab-C0}
Suppose that for some $M'\ge1$ and $\omega'\geq 0$ one has
$$
\|(T_2(\tfrac tn)T_1(\tfrac tn))^n\|\leq M'\ee^{t\omega'}$$
for all $t\ge 0$ and $n\in \NN$. Then there exist $M\ge1$ and $\omega\in\RR$ such that
\begin{equation}\label{eq:stab-C0}
\bigl\|\bigl(\calT_2(\tfrac tn)\calT_1(\tfrac tn)\bigr)^n\bigr\|\leq M\ee^{t\omega}\quad\mbox{holds for all $t\geq 0$ and $n\in\NN$}.
\end{equation}
Moreover, the product formulas \eqref{eq:conv_Lie} and \eqref{eq:conv_Strang} described in Theorem~\ref{prop-splitting} with respect to the operator splitting $\calA=\calA_1+\calA_2$ for $\calA_{1}$, $\calA_2$ defined by \eqref{eq:decomp-A-C0} hold.
\end{proposition}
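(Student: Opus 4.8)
The plan is to reduce the claim to Theorem~\ref{stability-triangular} by reading the $3\times3$ operator matrices in \eqref{eq:Tt+St} as $2\times2$ upper triangular matrices with respect to the coarser decomposition $\calE=E\times(F\times F)$. Grouping the last two coordinates, each $\calT_i$ takes the form $\left(\begin{smallmatrix} T_i(t) & R_i(t)\\ 0 & S_i(t)\end{smallmatrix}\right)$, where the upper-right entries, viewed as operators from $F\times F$ to $E$, are $R_1(t)=(Q_1(t),0)$ and $R_2(t)=(0,Q_2(t))$, and the diagonal entries on $F\times F$ are $S_1(t)=\left(\begin{smallmatrix}L(t)&0\\0&\Id\end{smallmatrix}\right)$ and $S_2(t)=\left(\begin{smallmatrix}\Id&0\\0&L(t)\end{smallmatrix}\right)$. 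Since Theorem~\ref{stability-triangular} treats the diagonal blocks as operators on arbitrary Banach spaces, the internal $2\times2$ structure of $F\times F$ plays no role, and it suffices to verify the two hypotheses of that theorem for this regrouping.

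For hypothesis~(i), the estimate on the $E$-component, $\|(T_2(\tfrac tn)T_1(\tfrac tn))^n\|\le M'\ee^{\omega' t}$, is precisely the standing assumption. For the $F\times F$-component I would compute $S_2(h)S_1(h)=\left(\begin{smallmatrix}L(h)&0\\0&L(h)\end{smallmatrix}\right)$, so that by the semigroup law $(S_2(\tfrac tn)S_1(\tfrac tn))^n=\left(\begin{smallmatrix}L(t)&0\\0&L(t)\end{smallmatrix}\right)$. As the left-shift $(L(t))_{t\ge0}$ is contractive on $\C_0(\RR_+;E)$, this block has norm at most $1$, and hypothesis~(i) holds.

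For hypothesis~(ii) I would estimate $\|R_i(t)\|=\|Q_i(t)\|$ via the convolution representation $Q_i(t)f=\int_0^t T_i(t-s)f(s)\dd s$. Using the exponential bound $\|T_i(\tau)\|\le M'\ee^{\omega'\tau}$ together with $\|f(s)\|\le\|f\|_\infty$ for $f\in\C_0(\RR_+;E)$, and recalling $\omega'\ge0$, one obtains $\|Q_i(t)f\|\le M'\bigl(\int_0^t\ee^{\omega'(t-s)}\dd s\bigr)\|f\|_\infty\le M'\,t\,\ee^{\omega' t}\|f\|_\infty$, hence $\|R_i(t)\|\le Kt\,\ee^{\omega' t}$ with $K=M'$. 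With both hypotheses in hand, Theorem~\ref{stability-triangular} yields the stability estimate \eqref{eq:stab-C0} directly.

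For the convergence statement I would note that $\calA=\overline{\calA_1+\calA_2}$ generates the semigroup $\cTtt$, being a bounded perturbation of the diagonal generator by the point-evaluation terms $\delta_0\in\calL(\C_0(\RR_+;E),E)$, exactly as in the $F=\C_0$ discussion preceding the proposition. The stability estimate \eqref{eq:stab-C0} is then precisely condition \eqref{eq:cond_stab_Lie} of Theorem~\ref{prop-splitting}(i), so the convergence of the Trotter formula \eqref{eq:conv_Lie} and the Strang formula \eqref{eq:conv_Strang} follows at once. The only genuinely delicate point is the bookkeeping of the block regrouping: once the $3\times3$ structure is recognized as a $2\times2$ triangular one with $F\times F$ on the diagonal, everything reduces to the already established Theorem~\ref{stability-triangular}, and the growth bound $\|Q_i(t)\|\le Kt\,\ee^{\omega' t}$ is simply the convolution estimate that already appeared in computing $T_2(t)$ at the start of this subsection.
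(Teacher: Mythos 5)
Your proof is correct, and its skeleton coincides with the paper's: both reduce the claim to Theorem~\ref{stability-triangular} by regrouping the $3\times3$ matrices of \eqref{eq:Tt+St} as $2\times2$ upper triangular block matrices on $\calE=E\times(F\times F)$, with $R_1(t)=(Q_1(t),0)$, $R_2(t)=(0,Q_2(t))$ and the diagonal $F\times F$-blocks multiplying out to $(S_2(\tfrac tn)S_1(\tfrac tn))^n=\mathrm{diag}(L(t),L(t))$, which is contractive. Where you genuinely diverge is in verifying hypothesis~(ii): the paper's one-line proof invokes Proposition~\ref{comparison-favard} together with the observation that each $\calT_i$ is a bounded perturbation of a diagonal semigroup --- since $\delta_0\in\calL(\C_0(\RR_+;E),E)$ and $E\subseteq\Fav_0(A_i)$, condition~(b) of that proposition yields $\|Q_i(t)\|\le Kt\,\ee^{\omega t}$ abstractly --- whereas you obtain the same bound by the elementary convolution estimate $\|Q_i(t)f\|\le M't\,\ee^{\omega' t}\|f\|_\infty$ from the explicit formula $Q_i(t)f=\int_0^t T_i(t-s)f(s)\dd s$. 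Your route is more self-contained and, notably, is exactly the computation the paper is forced to carry out later in the $\Ell^p$ setting, where $\delta_0$ is unbounded and the perturbation argument breaks down; the paper's route buys generality instead, showing that the $\calO(t)$ off-diagonal growth holds for any bounded (indeed any Favard-class) perturbation of a diagonal semigroup without knowing the off-diagonal entries explicitly. Two small points of bookkeeping, both harmless and both handled the same way in the paper: the individual bounds $\|T_i(\tau)\|\le M'\ee^{\omega'\tau}$ you use in the convolution estimate are a without-loss-of-generality enlargement of constants (as at the start of the proof of Theorem~\ref{stability-triangular}), not a consequence of the stated hypothesis on the products; and the identification of $\overline{\calA_1+\calA_2}$ as the generator of $\cTtt$ rests on the standing assumption that $\overline{A_1+A_2}$ generates $\Ttt$, after which your bounded-perturbation argument for the $\delta_0$-entries matches the discussion preceding the proposition.
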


\begin{proof}
Since $(\calT_1(t))_{t\geq0}$ and $(\calT_2(t))_{t\geq0}$ are bounded perturbations of diagonal semigroups, the claim follows from Theorem~\ref{stability-triangular} and Proposition~\ref{comparison-favard}.
\end{proof}

\noindent In a similar way we obtain the following results concerning the weighted splitting.

\begin{proposition}\label{t:stab-C0-weighted}
Suppose that for some $M'\ge1$ and $\omega'\geq 0$ one has
$$\bigl\|\tfrac{1}{2^n}(T_1(\tfrac tn)T_2(\tfrac tn)+T_2(\tfrac tn)T_1(\tfrac tn))^n\bigr\|\leq M'\ee^{t\omega'}$$
for all $t\ge 0$ and $n\in \NN$. Then there exist $M\ge1$ and $\omega\in\RR$ such that
\begin{equation}\label{eq:stab-C0-weighted}
\Bigl\|\frac{1}{2^n}\bigl(\calT_1(\tfrac tn)\calT_2(\tfrac tn)+\calT_2(\tfrac tn)\calT_1(\tfrac tn)\bigr)^n\Bigr\|\leq M\ee^{t\omega}
\end{equation}
holds for all $t\geq 0$ and $n\in\NN$. Moreover, the product formula \eqref{eq:conv_weighted} described in Theorem~\ref{prop-splitting} with respect to the operator splitting $\calA=\calA_1+\calA_2$ for $\calA_{1}$, $\calA_2$ defined by \eqref{eq:decomp-A-C0} holds.
\end{proposition}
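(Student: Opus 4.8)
The plan is to reduce the $3\times3$ structure appearing in \eqref{eq:Tt+St} to the $2\times2$ block-triangular framework of Theorem~\ref{prop:stability-triangular_weighted} and then invoke that theorem, exactly paralleling the sequential argument of Proposition~\ref{t:stab-C0}. First I would regroup the product space as $\calE=E\times(F\times F)$, so that with respect to this splitting each $\calT_i(t)$ is upper triangular: the first (diagonal) block of $\calT_1(t)$ is $T_1(t)$ and its second is $\mathrm{diag}(L(t),I)$, while for $\calT_2(t)$ the blocks are $T_2(t)$ and $\mathrm{diag}(I,L(t))$; the off-diagonal blocks are the rows $(Q_1(t),0)$ and $(0,Q_2(t))$, mapping $F\times F\to E$.

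Next I would verify hypotheses (i) and (ii) of Theorem~\ref{prop:stability-triangular_weighted}. For (i), the weighted estimate on the $E$-block is precisely the assumption of the proposition. For the $F\times F$-block I observe that the two diagonal shift semigroups commute, so that $\frac{1}{2^n}\bigl(\mathrm{diag}(L(\tfrac tn),I)\,\mathrm{diag}(I,L(\tfrac tn))+\mathrm{diag}(I,L(\tfrac tn))\,\mathrm{diag}(L(\tfrac tn),I)\bigr)^n=\mathrm{diag}(L(t),L(t))$; since the left-shift $(L(t))_{t\ge0}$ on $\C_0(\RR_+;E)$ is contractive, this block is bounded by $1$. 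For (ii), the bound $\|Q_i(t)\|\le Kt\,\ee^{\omega t}$ follows from Proposition~\ref{comparison-favard}: each $\calA_i$ is a bounded perturbation of a diagonal generator by the point evaluation $\delta_0\in\calL(F,E)$, and since $E$ embeds continuously into the extrapolated Favard class $\Fav_1(A_{i,-1})$, condition (c) of that proposition is met and yields the growth estimate (a') for the off-diagonal entries $Q_i$.

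With (i) and (ii) in hand, Theorem~\ref{prop:stability-triangular_weighted} gives \eqref{eq:stab-C0-weighted} at once. Finally, since \eqref{eq:stab-C0-weighted} is exactly the stability condition \eqref{eq:cond_stab_weighted}, I would deduce convergence of the weighted product formula \eqref{eq:conv_weighted} from Theorem~\ref{prop-splitting}(ii), whose standing hypothesis that $\calA=\overline{\calA_1+\calA_2}$ generates a semigroup is satisfied here by the bounded-perturbation structure of the $\calA_i$.

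I expect the only genuinely delicate point to be the bookkeeping in the block reduction, namely confirming that the weighted combination of the two shift semigroups on $F\times F$ really does collapse and stay bounded, together with the justification that $\delta_0$ maps into the extrapolated Favard class so that Proposition~\ref{comparison-favard} applies. Everything else is a direct transcription of the sequential case, the weighted version of the off-diagonal estimate \eqref{eq:power-k2-w} being handled abstractly inside Theorem~\ref{prop:stability-triangular_weighted}.
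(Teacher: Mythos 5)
Your proposal is correct and is essentially the paper's own argument: the paper's proof is precisely the one-line reduction to Theorem~\ref{prop:stability-triangular_weighted} and Proposition~\ref{comparison-favard} via the bounded-perturbation structure ($\delta_0\in\calL(F,E)$ in the $\C_0$-case), and you have merely made explicit the block regrouping $\calE=E\times(F\times F)$, the collapse $\frac{1}{2^n}\bigl(S_1(\tfrac tn)S_2(\tfrac tn)+S_2(\tfrac tn)S_1(\tfrac tn)\bigr)^n=\mathrm{diag}\bigl(L(t),L(t)\bigr)$ of the commuting shift blocks, and the Favard-class justification of the off-diagonal bound. One small slip: the characterization you invoke in Proposition~\ref{comparison-favard} (the representation $\calA=(\calD_{-1}+\calP)|_\calE$ with $\calP$ mapping into the extrapolated Favard class) is condition~(b) there, not condition~(c).
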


\begin{proof}
The proof follows similarly as the one of Proposition~\ref{t:stab-C0} from Theorem~\ref{prop:stability-triangular_weighted} and Proposition~\ref{comparison-favard}.
\end{proof}
\noindent Note that the condition $\omega'\geq 0$ is neither a restriction, nor crucial, and was chosen only to simplify our calculations in the following subsection.

\subsubsection{Stability in $F=\Ell^p(\RR_+;E)$}
Our aim is now to prove that Propositions~\ref{t:stab-C0} and \ref{t:stab-C0-weighted} remain true if we replace the space $F=\C_0(\RR_+;E)$ by $F=\Ell^p(\RR_+;E)$ for some $1\leq p<\infty$.
This is not straightforward since in the $\C_0$-case the stability condition \eqref{eq:stab-C0} follows by bounded perturbation. However, in the $\Ell^{p}$-case the perturbation
\[
\delta_0:D\bigl(\tfrac{\dd}{\dd s}\bigr)=\W^{1,p}(\RR_+;E)\subset F
\to E
\]
is unbounded on $F$ and hence it is not guaranteed in general that the off-diagonal perturbing term $R(t)$ is $\calO(t)$ as $t\to0^+$. Nevertheless, due to a particular additivity property of the norm in $\Ell^{1}$, stability prevails also in this case. To show this, suppose that the conditions of Proposition \ref{t:stab-C0-weighted} are satisfied.
First we group the entries of $\calT_i(t)$, $i=1,2$, from \eqref{eq:Tt+St} and obtain the $2\times 2$-block matrices
\begin{align*}
\calT_1(t)&=\left(
\begin{MAT}(b){c1cc}
T_1(t) &Q_1(t) & 0\\1
  0   & L(t)& 0\\
  0  &0 &I\\
\end{MAT}
\right)
=:\begin{pmatrix}
T_1(t)& R_1(t)\\
0& S_1(t)
\end{pmatrix},
\\
\calT_2(t)&=\left(
\begin{MAT}(b){c1cc}
T_2(t) &0 & Q_2(t)\\1
  0   & I& 0\\
  0  &0 &L(t)\\
\end{MAT}
\right)
=:\begin{pmatrix}
T_2(t)& R_2(t)\\
0& S_2(t)
\end{pmatrix}.
\end{align*}
Here, as in the previous case, $(L(t))_{t\ge0}$ denotes the left-shift semigroup  which is now defined on the space $F=\Ell^p(\RR_+;E)$ and has generator $\frac{\dd}{\dd s}$ with domain $D(\frac{\dd}{\dd s})=\W^{1,p}(\RR_+;E)$. Then from
\eqref{eq:power-k} and \eqref{eq:power-k2} we obtain that
\begin{equation*}
\bigl(\mathcal{T}_2(h)\mathcal{T}_1(h)\bigr)^k =
\begin{pmatrix} \bigl(T_2(h)T_1(h)\bigr)^k & (*) & (**) \\
0 & L(kh) & 0 \\
0 & 0 & L(kh) \end{pmatrix},
\end{equation*}
where
\begin{align*}
(*) &= \sum_{j=0}^{k-1} \bigl(T_2(h)T_1(h)\bigr)^j T_2(h)Q_1(h)L\bigl((k-j-1)h\bigr),
\intertext{and}
(**) &= \sum_{j=0}^{k-1} \bigl(T_2(h)T_1(h)\bigr)^j Q_2(h)L\bigl((k-j-1)h\bigr).
\end{align*}
In the $\C_0$-case $Q_1(h)$ and $Q_2(h)$ were $\calO(h)$ as $h\to0^+$ and hence rather crude estimates for the sums $(*)$ and $(**)$ already implied stability.
In the present situation we have to be more careful and estimate
\begin{align*}
\|(**)\| &\leq \sum_{j=0}^{k-1} \left\|\bigl(T_2(h)T_1(h)\bigr)^j\right\| \cdot \left\|Q_2(h)L\bigl((k-j-1)h\bigr)\right\| \\
&\leq M' \ee^{\omega' k h}  \sum_{j=0}^{k-1} \left\|Q_2(h)L\bigl((k-j-1)h\bigr)\right\|.
\end{align*}
Note now that since $f\in\Ell^p$, it is also in $\Ell^1_{\text{loc}}$. Using the additivity of the $\Ell^1$-norm with respect to the domain of integration we obtain for $f\in F$
\begin{align*}
\sum_{j=0}^{k-1} &\left\|Q_2(h)L\bigl((k-j-1)h\bigr)f\right\|\\
&=\sum_{j=0}^{k-1}\Bigl\|\int_0^h T_2(h-s)\left[L\bigl((k-j-1)h\bigr)f\right](s)\dd s\Bigr\|\\
&=\sum_{j=0}^{k-1}\Bigl\|\int_0^h T_2(h-s)f(s+(k-j-1)h))\dd s\Bigr\|\\
&\leq M'\ee^{\omega' h}\sum_{j=0}^{k-1}\int_{jh}^{(j+1)h} \|f(s)\|\dd s = M'\ee^{\omega' h}\int_0^{kh}\|f(s)\| \dd s \\
& \leq M'\ee^{\omega' h} (kh)^{1-\frac{1}{p}}\left(\int_0^{kh}\|f(s)\|^p \dd s\right)^{\frac{1}{p}}
 \leq  M''\ee^{\omega'' kh}\|f\|.
\end{align*}%
This implies that there exists $M\geq 1$ and $\omega\geq 0$ so that
\begin{equation*}
\|(**)\| \leq M \ee^{\omega kh}\quad\mbox{ and similarly }\quad  \|(*)\| \leq M \ee^{\omega kh}
\end{equation*}
hold.

\begin{remark}
In the same spirit, the stability of the weighted splitting can also be established. Since the proof is straightforward and would be only a repetition of what we had done so far, we omit it.
\end{remark}

\noindent We therefore obtain the following results.

\begin{proposition} Propositions~\ref{t:stab-C0} and \ref{t:stab-C0-weighted} prevail for $F=\Ell^p(\RR_+;E)$ and $\frac{\dd}{\dd s}$ with domain $D(\frac{\dd}{\dd s})=\W^{1,p}(\RR_+;E)$.
\end{proposition}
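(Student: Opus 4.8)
The plan is to reduce everything to the block--power formulas \eqref{eq:power-k}--\eqref{eq:power-k2} and \eqref{eq:power-k-w}--\eqref{eq:power-k2-w} together with the entrywise estimates already obtained above. The point to stress is that the bounded--perturbation argument underlying Proposition~\ref{t:stab-C0} is no longer available: on $\Ell^p(\RR_+;E)$ the point evaluation $\delta_0$ is unbounded, so $\calA$ is not a bounded perturbation of its diagonal part, and condition~(ii) of Theorem~\ref{stability-triangular} (equivalently, condition~(a) of Proposition~\ref{comparison-favard}) may genuinely fail. I would therefore not invoke Theorem~\ref{stability-triangular} directly, but instead verify the stability bound \eqref{eq:stab-C0} by hand from the four block estimates.

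For the sequential splitting the argument runs as follows. In $(\calT_2(h)\calT_1(h))^k$ the diagonal blocks are $(T_2(h)T_1(h))^k$, controlled by the standing hypothesis of Proposition~\ref{t:stab-C0}, and two copies of $L(kh)$, which are uniformly bounded since the left--shift semigroup on $\Ell^p(\RR_+;E)$ is contractive. The off--diagonal blocks are exactly the terms $(*)$ and $(**)$ computed above, and these were shown to satisfy $\|(*)\|,\|(**)\|\le M\ee^{\omega kh}$. Assembling the four bounds and setting $h=\tfrac tn$, $k=n$ yields \eqref{eq:stab-C0}, whence the convergence of the Trotter and Strang formulas follows from Theorem~\ref{prop-splitting}. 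This establishes Proposition~\ref{t:stab-C0} in the $\Ell^p$ setting.

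The weighted case is handled identically. The power formula \eqref{eq:power-k-w}--\eqref{eq:power-k2-w} has diagonal blocks $\tfrac1{2^n}(T_1(h)T_2(h)+T_2(h)T_1(h))^n$, bounded by the hypothesis of Proposition~\ref{t:stab-C0-weighted}, together with shift blocks, while the single off--diagonal term $(\star\star)_k$ expands into sums each of the same shape as $(*)$ and $(**)$ and is estimated by the identical $\Ell^1$--additivity argument, as indicated in the Remark. This gives \eqref{eq:stab-C0-weighted} and hence, again by Theorem~\ref{prop-splitting}, the convergence of the weighted formula.

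The one genuinely new ingredient---and the step I expect to carry the whole proof---is the off--diagonal estimate already performed above for $(**)$. Its heart is the additivity of the $\Ell^1$--norm over the disjoint intervals $[jh,(j+1)h]$: after writing $[L((k-j-1)h)f](s)=f(s+(k-j-1)h)$ and bounding $\|Q_2(h)L((k-j-1)h)f\|$ by $M'\ee^{\omega' h}\int_{jh}^{(j+1)h}\|f(s)\|\dd s$, the sum over $j$ telescopes into the single integral $\int_0^{kh}\|f(s)\|\dd s$. One then passes from this $\Ell^1$--norm to the ambient $\Ell^p$--norm by H\"older's inequality, at the cost of the factor $(kh)^{1-1/p}$, which is polynomial in $kh$ and is therefore absorbed into an arbitrarily small exponential enlargement of $\omega$. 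This is precisely the mechanism replacing bounded perturbation, and ensuring that the telescoping and the H\"older step interact cleanly for every $1\le p<\infty$ is the only point that requires care.
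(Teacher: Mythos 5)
Your proposal is correct and takes essentially the same approach as the paper: the paper's own ``proof'' is precisely the preceding block-matrix computation, in which the diagonal blocks are handled by the standing hypotheses and the contractivity of the left shift, the off-diagonal blocks $(*)$ and $(**)$ are controlled by the $\Ell^1$-additivity/telescoping argument followed by H\"older's inequality absorbing the factor $(kh)^{1-1/p}$, and the weighted case is dismissed as an identical repetition. Your explicit observation that the bounded-perturbation route via Theorem~\ref{stability-triangular} is unavailable because $\delta_0$ is unbounded on $\Ell^p$ matches the paper's motivation exactly.
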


Summing up both cases, we have established that the splitting for the inhomogeneous abstract Cauchy problem with a $\C_0(\RR_+;E)$ or a $\Ell^p(\RR_+;E)$ inhomogeneity is stable if the splitting for the  associated homogeneous problem is stable.

\subsection{Abstract Boundary Feedback Systems}\label{sec:ABVS}
Let $E$ and $\partial E$ be Banach spaces and let the operators $A_m:\dom(A_m)\subseteq E\to E$, $B:\dom(B)\subseteq\partial E \to \partial E$, $C\in\calL(E,\partial E)$ and $L:\dom(A_m)\to \partial E$ be given. An abstract boundary feedback system is a system of two coupled differential equations of the form
\[\tag{ABFS}\label{ABFS}
\begin{cases}
\tfrac\dd{\dd t}\,u(t)=A_m u(t),& t\ge0,\\
\tfrac\dd{\dd t}\,x(t)=B x(t)+Cu(t),& t\ge0,\\
Lu(t)=x(t),& t\ge0,\\
u(0)=u_0,\; x(0)=x_0,
\end{cases}
\]
where the functions $u$ and $x$ are $E$ and $\partial E$-valued, respectively. We refer to Casarino et al.~\cite{CENN:03} for more details and concrete examples. Now under suitable assumptions (see below) such systems can be rewritten as an abstract Cauchy problem \eqref{ACP}, where the coupling $Lu(t)=x(t)$ of the two equations is coded in the domain of the system operator $\calA$. To proceed we make as in Casarino et al.~\cite[Section~2]{CENN:03} the following
\begin{assumption}\label{assumptions-CENN}
\begin{enumerate}[{(i)}]
\item $A:=A_m|_{\ker L}$ generates a semigroup $(T(t))_{t\geq 0}$ on $E$.
\item $L:\dom (A_m)\to \partial E$ is surjective.
\item $\vect{A_m}{L}:\dom (A_m)\to E\times \partial E$ is closed.
\item $B$ generates a semigroup $(S(t))_{t\geq 0}$ on $\partial E$.
\end{enumerate}
\end{assumption}

\noindent Then by Casarino et al.~\cite[Lemma 2.2]{CENN:03}, the following holds.

\begin{lemma}\label{dirichlet} If $\lambda\in\rho(A)$, then the restriction $L|_{\ker(\lambda-A_m)}:\ker(\lambda-A_m)\to\partial E$
is invertible and its inverse, called \textit{Dirichlet operator},
\[
D_\lambda:=\bigl(L|_{\ker(\lambda-A_m)}\bigr)^{-1}:\partial E\to\ker(\lambda-A_m)\subset E
\]
is bounded.
\end{lemma}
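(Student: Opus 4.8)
The plan is to first establish that the restriction $L|_{\ker(\lambda-A_m)}$ is a bijection from $\ker(\lambda-A_m)$ onto $\partial E$, and then to deduce boundedness of its inverse $D_\lambda$ from the closed graph theorem. Throughout I write $R(\lambda,A)$ for the resolvent of $A=A_m|_{\ker L}$, which is at our disposal since $\lambda\in\rho(A)$ by hypothesis, and I will make repeated use of the fact that $\dom(A)=\dom(A_m)\cap\ker L$ with $A_mx=Ax$ for $x\in\dom(A)$.

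First I would treat injectivity: if $x\in\ker(\lambda-A_m)$ satisfies $Lx=0$, then $x\in\dom(A)$, so $A_mx=Ax$, and together with $A_mx=\lambda x$ this yields $(\lambda-A)x=0$; since $\lambda-A$ is injective we conclude $x=0$. For surjectivity, fix $y\in\partial E$. By Assumption~\ref{assumptions-CENN}(ii) there exists $z\in\dom(A_m)$ with $Lz=y$. Setting $w:=R(\lambda,A)(\lambda-A_m)z\in\dom(A)\subseteq\ker L$, one has $(\lambda-A_m)w=(\lambda-A)w=(\lambda-A_m)z$, so that $x:=z-w$ lies in $\ker(\lambda-A_m)$ while $Lx=Lz-Lw=y-0=y$. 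This produces the required preimage, showing that $L|_{\ker(\lambda-A_m)}$ is invertible with inverse $D_\lambda\colon\partial E\to\ker(\lambda-A_m)\subseteq E$.

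It then remains to prove that $D_\lambda$ is bounded, and this is the step I expect to demand the most care, since it is precisely where the closedness hypothesis Assumption~\ref{assumptions-CENN}(iii) enters in an essential way. As $E$ and $\partial E$ are Banach spaces, by the closed graph theorem it suffices to verify that $D_\lambda$ is a closed operator. To this end I would take sequences $y_n\to y$ in $\partial E$ with $x_n:=D_\lambda y_n\to x$ in $E$; by construction $x_n\in\ker(\lambda-A_m)$ and $Lx_n=y_n$, hence $A_mx_n=\lambda x_n\to\lambda x$ while $Lx_n\to y$, that is $\vect{A_mx_n}{Lx_n}\to\vect{\lambda x}{y}$ in $E\times\partial E$. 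The closedness of $\vect{A_m}{L}$ then forces $x\in\dom(A_m)$ with $A_mx=\lambda x$ and $Lx=y$, so that $x\in\ker(\lambda-A_m)$ and $D_\lambda y=x$. Thus $D_\lambda$ is closed, and therefore bounded, which completes the argument.
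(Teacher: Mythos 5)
Your proof is correct: injectivity follows directly from $\lambda\in\rho(A)$, your surjectivity argument via $w=R(\lambda,A)(\lambda-A_m)z$ is sound (it is in effect the standard decomposition $\dom(A_m)=\dom(A)\oplus\ker(\lambda-A_m)$), and the closed graph theorem applies since $D_\lambda$ is everywhere defined between the Banach spaces $\partial E$ and $E$ and has closed graph precisely by the closedness of $\vect{A_m}{L}$ from Assumption~\ref{assumptions-CENN}(iii). The paper itself gives no proof but merely cites Casarino et al.~\cite[Lemma~2.2]{CENN:03}, and your argument is essentially the standard one found there (going back to Greiner's treatment of boundary perturbations), so the approaches coincide.
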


\begin{remark} We note that condition (iii) in Assumption~\ref{assumptions-CENN} can be replaced by
\begin{enumerate}
\item[(iii')] $D_\lambda\in\calL(\partial E,E)$ exists for all $\lambda\in\rho(A)$
\end{enumerate}
which sometimes is easier to verify than the closedness of $\vect{A_m}{L}$.
\end{remark}

\noindent In order to treat \eqref{ABFS} by semigroup methods we
define on $\calE:=E\times\dE$  the operator matrix
\begin{align*}
\calA_C:&=\begin{pmatrix}\Am&0\\C&B
       \end{pmatrix}
\intertext{with domain} D(\calA_C):&=\big\{\tbinom{f}{x}\in
D(\Am)\times D(B):Lf=x\big\}.
\end{align*}

\noindent Then by Casarino et al.~\cite[Section 2]{CENN:03} and by Engel and Nagel \cite{EN:00}, the system \eqref{ABFS} is well-posed if and only if the operator matrix $\calA_C$ generates a strongly continuous semigroup $\cTCtt$ on $\calE$. Moreover, in this case for every initial value $\binom{u_0}{x_0}\in\dom(\calA_C)$ the unique solution of \eqref{ABFS} is given by
\[
\RR_+\ni t\mapsto\pi_1\bigl(\cTCt\tbinom{u_0}{x_0}\bigr)\in E.
\]
In order to apply the splitting approach to this problem we first assume that $C=0$ and decompose $\calA_0=\calA_1+\calA_2$ for
\begin{equation}\label{eq:dec-CENN}
\begin{aligned}
\calA_1:=&\begin{pmatrix}\Am&0\\0&0
       \end{pmatrix}, &&D(\calA_1):=\big\{\tbinom{f}{x}\in
D(\Am)\times \partial E:Lf=x\big\},\\
\calA_2:=&\begin{pmatrix}0&0\\0&B
       \end{pmatrix}, &&D(\calA_2):=E\times D(B).
\end{aligned}
\end{equation}
Then by Casarino at al.~\cite[Corollary~2.9]{CENN:03} the matrix $\calA_1$ is the generator of a strongly continuous semigroup $(\calT_1(t))_{t\geq0}$. Moreover, if $A$ is invertible, then $\calT_1(t)$ is given by
\[
\calT_1(t)=
\begin{pmatrix}
T(t)&\bigl(\Id-T(t)\bigr)D_0\\
0&\Id
\end{pmatrix}.
\]
On the other hand, also $\calA_2$ is a generator of a strongly continuous semigroup $(\calT_2(t))_{t\geq0}$ which can be easily calculated as
\[
\calT_2(t)=
\begin{pmatrix}
\Id&0\\
0& S(t)
\end{pmatrix}.
\]
Now the assumptions (i)--(iii) of Theorem~\ref{stability-triangular} and Theorem~\ref{prop:stability-triangular_weighted} are satisfied if there exists $K>0$ and $\omega\in\RR$ such that for $R_1(t):=(\Id-T(t))D_0$ we have that
\begin{equation}\label{eq:stab-Rt}
\|R_1(t)\|\le Kt\cdot\ee^{\omega t}\quad\text{for all }t\ge0.
\end{equation}
Note that by Proposition~\ref{comparison-favard}, Lemma~\ref{lem-DS} and \cite[Lemma~2.6]{CENN:03}, condition~\eqref{eq:stab-Rt} is equivalent to the assumption
\begin{equation}\label{eq:stab-Rt-fav}
D(\Am)\subset\Fav_1(A).
\end{equation}
This condition can be characterized by the following result of Desch and Schappacher \cite[Theorem 9]{DS:89}.
\begin{lemma}\label{lem-DS} Let the Assumptions~\ref{assumptions-CENN}.(i)--(iii) be satisfied. If $D_\lambda$ denotes the Dirichlet operator introduced in Lemma~\ref{dirichlet}, then the following conditions are equivalent.
\begin{enumerate}[(a)]
\item $D(\Am)\subset\FeA$.
\item $\ker(\lambda-\Am)\subset\FeA$ for some $\lambda\in\rho(A)$.
\item There exist $\gamma>0$ and $\lambda_0\in\RR$ such that
$\|Lx\|\ge \gamma\lambda\cdot\|x\|$ for all $\lambda>\lambda_0$,
$x\in\ker(\lambda-\Am)$.
\item There exist $c>0$ and $w>0$ such that
$\|D_\lambda\|\le c\cdot\lambda^{-1}$ for all $\lambda>w$.
\end{enumerate}
\end{lemma}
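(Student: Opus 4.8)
The plan is to prove the four conditions equivalent through the chain $(a)\Leftrightarrow(b)$, $(b)\Leftrightarrow(d)$, $(d)\Leftrightarrow(c)$, the engine being a resolvent identity that expresses $R(\mu,A)$ on $\ker(\lambda-\Am)$ via the Dirichlet operator $D_\mu$. Two implications come for free. The inclusion $\ker(\lambda-\Am)\subseteq D(\Am)$ gives $(a)\Rightarrow(b)$ at once. For $(b)\Rightarrow(a)$ I would fix the $\lambda\in\rho(A)$ furnished by $(b)$ and split an arbitrary $f\in D(\Am)$ as $f=(f-D_\lambda Lf)+D_\lambda Lf$. Since $LD_\lambda=\Id$ on $\dE$, the first summand satisfies $L(f-D_\lambda Lf)=0$, hence lies in $\ker L\cap D(\Am)=D(A)\subseteq\FeA$ (the inclusion $D(A)\subseteq\FeA$ being standard, as $\|T(t)x-x\|\le t\,\sup_{s\in[0,1]}\|T(s)\|\cdot\|Ax\|$ for $x\in D(A)$ and $t\in(0,1]$); the second summand lies in $\ker(\lambda-\Am)\subseteq\FeA$ by $(b)$. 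As $\FeA$ is a linear subspace, $f\in\FeA$, which is $(a)$.

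The core step is the identity: for $x\in\ker(\lambda-\Am)$ and $\mu\in\rho(A)$ put $z:=x-D_\mu Lx$. Then $Lz=Lx-Lx=0$, so $z\in D(A)$, and using $\Am x=\lambda x$ together with $\Am D_\mu Lx=\mu\,D_\mu Lx$ one finds $(\mu-A)z=(\mu-\lambda)x$, so that
\[
R(\mu,A)x=\frac{1}{\mu-\lambda}\bigl(x-D_\mu Lx\bigr),
\qquad
\mu R(\mu,A)x-x=\frac{\lambda x-\mu D_\mu Lx}{\mu-\lambda}.
\]
To obtain $(b)\Leftrightarrow(d)$ I would first rescale (which leaves both $\FeA$ and the large-$\mu$ behaviour untouched) so that the semigroup is bounded and the resolvent description $x\in\FeA\iff\sup_{\mu}\mu\|\mu R(\mu,A)x-x\|<\infty$ of Engel and Nagel is available. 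By the identity and $\tfrac{\mu}{\mu-\lambda}\to1$, membership $x\in\FeA$ is then equivalent to $\sup_\mu\mu\|D_\mu Lx\|<\infty$. Since, by Lemma~\ref{dirichlet}, $L$ maps $\ker(\lambda-\Am)$ bijectively onto $\dE$, the inclusion $\ker(\lambda-\Am)\subseteq\FeA$ amounts to $\sup_\mu\mu\|D_\mu y\|<\infty$ for every $y\in\dE$; the uniform boundedness principle then upgrades this pointwise bound to $\sup_\mu\mu\|D_\mu\|<\infty$, i.e. $(d)$, the reverse being trivial.

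Finally $(d)\Leftrightarrow(c)$ is purely algebraic, using $D_\mu Lx=x$ for $x\in\ker(\mu-\Am)$: condition $(d)$ gives $\|x\|=\|D_\mu Lx\|\le c\mu^{-1}\|Lx\|$, i.e. $\|Lx\|\ge c^{-1}\mu\|x\|$, which is $(c)$ with $\gamma=c^{-1}$, while applying $(c)$ to $x=D_\mu y$ yields $\|D_\mu y\|\le(\gamma\mu)^{-1}\|y\|$, which is $(d)$. I expect the only genuine difficulty to be the resolvent identity and the careful justification that the resolvent characterization of the Favard class is applicable here (the rescaling argument and the passage from $\mu-\omega$ to $\mu$ in the growth rate); once these are secured, the remaining implications are bookkeeping together with Banach--Steinhaus.
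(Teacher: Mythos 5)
Your proof is correct, but it cannot be compared line by line with anything in the paper: the authors give no proof of Lemma~\ref{lem-DS} at all, quoting it from Desch and Schappacher \cite[Theorem 9]{DS:89}, so your argument is a self-contained reconstruction, and it holds up. The easy implications are fine: $(a)\Rightarrow(b)$ is the inclusion $\ker(\lambda-\Am)\subseteq \dom(\Am)$, and your decomposition $f=(f-D_\lambda Lf)+D_\lambda Lf$, together with $LD_\lambda=\Id$ on $\dE$, $\ker L=\dom(A)$ and the elementary inclusion $\dom(A)\subseteq\FeA$, gives the converse. Your key identity is right: for $x\in\ker(\lambda-\Am)$ one has $L(x-D_\mu Lx)=0$ and $(\mu-A)(x-D_\mu Lx)=(\mu-\lambda)x$, which is Greiner's resolvent identity $D_\mu=\bigl(\Id-(\mu-\lambda)R(\mu,A)\bigr)D_\lambda$ in disguise; from it $\mu R(\mu,A)x-x=\tfrac{1}{\mu-\lambda}\bigl(\lambda x-\mu D_\mu Lx\bigr)$, so the resolvent characterization of the Favard class for bounded semigroups (see \cite[Section II.5.b]{EN:00}), legitimately imported via rescaling since $\FeA=\Fav_1(A-\omega)$ and the shift $\mu\mapsto\mu+\omega$ does not disturb the asymptotics $\|D_\mu\|=\calO(\mu^{-1})$, reduces membership of $x$ in $\FeA$ to $\sup_{\mu>w}\mu\|D_\mu Lx\|<\infty$; the bijectivity of $L|_{\ker(\lambda-\Am)}$ from Lemma~\ref{dirichlet} and Banach--Steinhaus applied to the family $\{\mu D_\mu:\mu>w\}\subset\calL(\dE,E)$ then give $(d)$, the converse being immediate. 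Your computation for $(c)\Leftrightarrow(d)$ via $D_\mu Lx=x$ on $\ker(\mu-\Am)$ and $LD_\mu y=y$ is also correct; the one point you should state explicitly is that $\lambda_0$ and $w$ must be taken larger than the growth bound of $\Ttt$, so that every real $\lambda>\lambda_0$ lies in $\rho(A)$ and the Dirichlet operator is actually available there --- condition $(c)$ quantifies over all $\lambda>\lambda_0$, while your argument only controls $\ker(\lambda-\Am)$ for $\lambda\in\rho(A)$, and enlarging $\lambda_0$ removes this gap. Compared with the paper, the citation buys brevity; your proof buys transparency, showing that the lemma rests on nothing beyond Greiner's identity, the resolvent description of $\FeA$, and uniform boundedness --- essentially the same toolkit as Proposition~\ref{comparison-favard} --- and it is presumably close in spirit to the original argument of \cite{DS:89}.
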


\noindent Summing up, we obtain the following.
\begin{corollary}\label{thm:conv-splitt-ABVS}
Let the Assumptions~\ref{assumptions-CENN} be satisfied. If in addition $0\in\rho(A)$, $C=0$ and $D(A_m)\subset\Fav(A)$, then the product formulas \eqref{eq:conv_Lie} and \eqref{eq:conv_Strang} described in Theorem~\ref{prop-splitting} for the decomposition $\calA_0=\calA_1+\calA_2$ and $\calA_{1}$, $\calA_2$ defined by \eqref{eq:dec-CENN} converge to the semigroup $(\calT_0(t))_{t\ge0}$ generated by $\calA_0$.
\end{corollary}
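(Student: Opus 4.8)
The plan is to assemble Corollary~\ref{thm:conv-splitt-ABVS} from the machinery already developed, so the task reduces to verifying the hypotheses of Theorem~\ref{prop-splitting} by way of Theorems~\ref{stability-triangular} and \ref{prop:stability-triangular_weighted}. First I would record that under Assumption~\ref{assumptions-CENN} the operators $\calA_1$ and $\calA_2$ defined in \eqref{eq:dec-CENN} are generators, by Casarino et al.~\cite[Corollary~2.9]{CENN:03}, and that (because $0\in\rho(A)$) the corresponding semigroups have the explicit upper triangular form displayed just before the statement, namely
\[
\calT_1(t)=\begin{pmatrix}T(t)&(\Id-T(t))D_0\\0&\Id\end{pmatrix},\qquad
\calT_2(t)=\begin{pmatrix}\Id&0\\0&S(t)\end{pmatrix}.
\]
Thus the setting is precisely that of a pair of upper triangular matrix semigroups, with diagonal entries $T_1=T$, $S_1=\Id$, $T_2=\Id$, $S_2=S$ and off-diagonal entries $R_1(t)=(\Id-T(t))D_0$, $R_2(t)=0$.

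The next step is to check hypothesis~(i) of both stability theorems. Since $T_1=T$, $T_2=\Id$, the diagonal product $(T_2(\tfrac tn)T_1(\tfrac tn))^n=T(t)$ is just the single semigroup $T$, which is exponentially bounded; similarly $(S_2(\tfrac tn)S_1(\tfrac tn))^n=S(t)$ is exponentially bounded. The same holds for the weighted combinations, since $\tfrac1{2^n}(T_1T_2+T_2T_1)^n=T(\tfrac tn)^n=T(t)$ and analogously for $S$, because one factor is the identity. Hence condition~(i) in each theorem is automatic, with a common $M'$ and $\omega'$ obtained from the growth bounds of $T$ and $S$. This is the routine part.

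The real content is hypothesis~(ii), the estimate $\|R_1(t)\|\le Kt\,\ee^{\omega t}$ ($R_2$ being zero is trivially bounded this way). Here I would invoke the chain of equivalences already set up: by Proposition~\ref{comparison-favard} the bound \eqref{eq:stab-Rt} is equivalent to the Favard-class containment $D(\Am)\subset\FeA$, and this is exactly the standing hypothesis $D(A_m)\subset\Fav(A)$ of the corollary. More precisely, Lemma~\ref{lem-DS} (the Desch--Schappacher criterion) together with \cite[Lemma~2.6]{CENN:03} translates $D(\Am)\subset\FeA$ into a decay estimate $\|D_\lambda\|\le c\lambda^{-1}$ on the Dirichlet operator, which in turn—via the representation of $R_1$ through $D_0$ and the extrapolation identities summarized in Proposition~\ref{comparison-favard}(c)—yields the linear-in-$t$ bound on $\|R_1(t)\|$. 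I would therefore cite these three results to conclude that \eqref{eq:stab-Rt} holds.

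With both hypotheses verified, Theorem~\ref{stability-triangular} gives the stability estimate \eqref{eq:stab1} and Theorem~\ref{prop:stability-triangular_weighted} gives \eqref{eq:stab2} for the splitting $\calA_0=\calA_1+\calA_2$; since $C=0$ one has $\calA_0=\overline{\calA_1+\calA_2}$ generating $\cTnt$, so the stability conditions \eqref{eq:cond_stab_Lie} and \eqref{eq:cond_stab_weighted} of Theorem~\ref{prop-splitting} are met and the convergence of \eqref{eq:conv_Lie} and \eqref{eq:conv_Strang} to $\calT_0(t)$ follows. The main obstacle, and the only place where the hypothesis $D(A_m)\subset\Fav(A)$ is genuinely used, is establishing the off-diagonal growth bound~(ii); everything else is a matter of recognizing that the diagonal stability is trivial because the relevant diagonal blocks reduce to a single semigroup or to the identity. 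I would take care to confirm that the invertibility assumption $0\in\rho(A)$ is what licenses the explicit form of $\calT_1$ (and hence the identification $R_1(t)=(\Id-T(t))D_0$) used throughout.
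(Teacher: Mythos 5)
Your proposal is correct and follows essentially the same route as the paper: it uses the explicit triangular forms of $\calT_1$ and $\calT_2$ (valid since $0\in\rho(A)$), observes that the diagonal hypotheses of Theorems~\ref{stability-triangular} and \ref{prop:stability-triangular_weighted} are trivial because one factor in each diagonal product is the identity, and reduces the off-diagonal bound \eqref{eq:stab-Rt} on $R_1(t)=(\Id-T(t))D_0$ to the Favard-class hypothesis $D(A_m)\subset\Fav_1(A)$ via Proposition~\ref{comparison-favard}, Lemma~\ref{lem-DS} and \cite[Lemma~2.6]{CENN:03}, exactly as in the discussion preceding the corollary. Your explicit verification of the diagonal stability and of $\calA_0=\overline{\calA_1+\calA_2}$ only makes implicit steps of the paper's argument more transparent.
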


\noindent In the next step we add a non-zero feedback operator $C\in\calL(E,\partial E)$ to our setting. More precisely, we decompose
\begin{equation}\label{eq:dec-CENN-C}
\calA_C= {}\calA_0+\calC\qquad\text{where}\qquad
\calC:={}
\begin{pmatrix}
0&0\\C&0
\end{pmatrix}\in\calL(\calE).
\end{equation}
Then from Proposition~\ref{stab-A-bdd} we obtain the following result.

\begin{corollary}
Let the Assumptions~\ref{assumptions-CENN} be satisfied and let $C\in\calL(\partial E,E)$. Then the product formulas \eqref{eq:conv_Lie}, \eqref{eq:conv_Strang} and \eqref{eq:conv_weighted} for the Trotter, Strang and weighted splitting with respect to the decomposition \eqref{eq:dec-CENN-C} converge to the semigroup $(\calT_C(t))_{t\ge0}$ generated by $\calA_C$.
\end{corollary}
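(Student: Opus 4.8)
The plan is to read the decomposition \eqref{eq:dec-CENN-C} as a splitting $\calA_C = \calA_0 + \calC$ in which the second summand $\calC \in \calL(\calE)$ is \emph{bounded}, and then to feed this directly into Proposition~\ref{stab-A-bdd} followed by Theorem~\ref{prop-splitting}. The point is that here we do \emph{not} split $\calA_0$ any further, so—in contrast with Corollary~\ref{thm:conv-splitt-ABVS}—neither the invertibility of $A$ nor the Favard-class inclusion $D(A_m)\subset\Fav_1(A)$ is required. First I would record the generation facts: under Assumptions~\ref{assumptions-CENN} the operator $\calA_0$ generates a strongly continuous semigroup $(\calT_0(t))_{t\ge0}$ on $\calE$ by the generation result of Casarino et al.~\cite[Section~2]{CENN:03}, and since $\calC\in\calL(\calE)$ is bounded, $\calA_C=\calA_0+\calC$ generates $(\calT_C(t))_{t\ge0}$ by the bounded perturbation theorem (Engel and Nagel \cite[Section~III.1]{EN:00}). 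The bounded operator $\calC$ itself generates the uniformly continuous semigroup $\calS(t)=\ee^{t\calC}$, cf.~\eqref{exp(tC)}.

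Next I would apply Proposition~\ref{stab-A-bdd} with $\calA:=\calA_0$ and $\calC$ as above. This immediately yields constants $M\ge1$ and $\omega\in\RR$ such that
\[
\bigl\|\bigl(\calS(\tfrac tn)\calT_0(\tfrac tn)\bigr)^n\bigr\|\le M\ee^{\omega t}
\quad\text{and}\quad
\Bigl\|\tfrac{1}{2^n}\bigl(\calS(\tfrac tn)\calT_0(\tfrac tn)+\calT_0(\tfrac tn)\calS(\tfrac tn)\bigr)^n\Bigr\|\le M\ee^{\omega t}
\]
for all $t\ge0$ and $n\in\NN$. Reading these with the identification $\calT_1:=\calT_0$ and $\calT_2:=\calS$ (so that $\calA_1:=\calA_0$ and $\calA_2:=\calC$ in the notation of Theorem~\ref{prop-splitting}), they are precisely the stability hypotheses \eqref{eq:cond_stab_Lie} and \eqref{eq:cond_stab_weighted}.

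Finally I would invoke Theorem~\ref{prop-splitting} for this splitting. Because $\calC$ is bounded we have $D(\calA_0+\calC)=D(\calA_0)$ and $\calA_0+\calC$ is already closed, so $\overline{\calA_1+\calA_2}=\calA_0+\calC=\calA_C$ generates $(\calT_C(t))_{t\ge0}$. Part~(i) of the theorem then delivers the convergence of the Trotter formula \eqref{eq:conv_Lie} and the Strang formula \eqref{eq:conv_Strang}, and part~(ii) delivers the convergence of the weighted formula \eqref{eq:conv_weighted}, each to $\calT_C(t)\calx$ for every $\calx\in\calE$. The only step needing a word of care is the identification $\overline{\calA_0+\calC}=\calA_C$, but this is immediate from the boundedness of $\calC$; every other step is a direct citation of Proposition~\ref{stab-A-bdd} and Theorem~\ref{prop-splitting}, so I do not expect a genuine obstacle in this argument.
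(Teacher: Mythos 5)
Your proposal is correct and is essentially identical to the paper's own argument: the paper obtains this corollary precisely by applying Proposition~\ref{stab-A-bdd} to the splitting $\calA_C=\calA_0+\calC$ with $\calC\in\calL(\calE)$ (whose semigroup is \eqref{exp(tC)}) to get the stability estimates \eqref{eq:cond_stab_Lie} and \eqref{eq:cond_stab_weighted}, and then invokes Theorem~\ref{prop-splitting}, the identification $\overline{\calA_0+\calC}=\calA_C$ being immediate from the boundedness of $\calC$. The only point to handle with a touch more care is your citation of Casarino et al.\ for the generation of $\calA_0$: under Assumptions~\ref{assumptions-CENN} alone this is not automatic, but since the statement itself presupposes that $\calA_C$ generates $(\calT_C(t))_{t\ge0}$, the generation of $\calA_0=\calA_C-\calC$ follows by bounded perturbation, which matches the paper's own implicit treatment.
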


\section*{Acknowledgments}
\noindent
The European Union and the European Social Fund have provided financial support to the project under the grant agreement no. T\'AMOP-4.2.1/B-09/1/KMR-2010-0003. Supported by the OTKA grant Nr. K81403. During the preparation of the paper B.~F.~was supported by the J\'{a}nos Bolyai Research Scholarship of the Hungarian Academy of Sciences. The financial support of the ``Stiftung Aktion \"Osterreich-Ungarn'' is gratefully acknowledged.


\begin{thebibliography}{12}

\bibitem{Batkai-Csomos-Farkas-Nickel1}
A.~B\'atkai, P.~Csom\'os, B.~Farkas, and G.~Nickel, \textit{Operator splitting
  for non-autonomous evolution equations}, J. Funct. Anal. \textbf{260} (2011), 2163--2190.

\bibitem{Batkai-Csomos-Farkas-Nickel2} A.~B\'atkai, P.~Csom\'os, B.~Farkas, and G.~Nickel, \textit{Operator splitting  with spatial-temporal discretization},  Operator Theory: Advances and Applications, \textbf{221} (2012), 161--171.

\bibitem{Batkai-Csomos-Nickel}
A.~B\'{a}tkai, P.~Csom\'{o}s, and G.~Nickel, \textit{Operator splittings and
  spatial approximations for evolution equations}, J. Evol. Equ. \textbf{9}
  (2009), no.~3, 613--636.

\bibitem{Batkai-Piazzera}
A.~B\'{a}tkai, S.~Piazzera,
\textit{Semigroups for Delay Equations},
A K Peters, Wellesley, Massachusetts, (2005).

\bibitem{Bjorhus}
M.~Bj\o rhus,
\textit{Operator splitting for abstract Cauchy problems},
J. Num. Anal. \textbf{18} (1998), 419--443.

\bibitem{CENN:03}
V.~Casarino, K.-J.~Engel, R.~Nagel, and G.~Nickel, \textit{A semigroup approach to boundary feedback systems}, Integral Equations Operator Theory \textbf{47} (2003), 289--306.

\bibitem{DS:89}
W.~Desch and W.~Schappacher, \textit{Some generation results for perturbed
  semigroups}, in: ``Semigroup {T}heory and {A}pplications (Proceedings Trieste 1987)''
  (Ph.~Cl\'{e}ment, S.~Invernizzi, E.~Mitidieri, and I.I. Vrabie, eds.), Lect.
  Notes in Pure and Appl. Math., vol. 116, Marcel Dekker, 1989, pp.~125--152.


\bibitem{Csomos-Nickel}
P.~Csom\'{o}s and G.~Nickel, \textit{Operator splitting for delay equations},
  Comput. Math. Appl. \textbf{55} (2008), no.~10, 2234--2246.

\bibitem{EN:00}
K.-J.~Engel and R.~Nagel, \textit{One-{P}arameter {S}emigroups for {L}inear {E}volution {E}quations}, Graduate Texts in Math., vol.~194, Springer-Verlag, 2000.

\bibitem{Farago-Havasi_book}
I.~Farag\'{o} and \'{A}.~Havasi, \textit{Operator splittings and their
  applications}, Mathematics Research Developments, Nova Science Publishers,
  New York, 2009.

\bibitem{Hairer-Lubich-Wanner}
E.~Hairer, C.~Lubich, and G.~Wanner, \textit{Geometric numerical integration: Structure-preserving algorithms for ordinary differential equations},
  second ed., Springer Series in Computational Mathematics, vol.~31,
  Springer-Verlag, Berlin, 2006.

\bibitem{Holden-Karlsen-Lie-Risebro}
H.~Holden, K.~H. Karlsen, K.-A. Lie, and N.~H. Risebro, \textit{Splitting methods
  for partial differential equations with rough solutions}, EMS Series of
  Lectures in Mathematics, European Mathematical Society (EMS), Z\"urich, 2010,
  Analysis and MATLAB programs.

\bibitem{Hundsdorfer-Verwer}
W.~Hundsdorfer and J.~Verwer, \textit{Numerical solution of time-dependent
  advection-diffusion-reaction equations}, Springer Series in Computational
  Mathematics, vol.~33, Springer-Verlag, Berlin, 2003.

\bibitem{Ostermann-Schratz}
A.~Ostermann, K.~Schratz, \textit{Error analysis of splitting methods for inhomogeneous evolution equations}, To appear in Appl. Numer. Math.

\bibitem{Tr:08}
Ch.~Tretter, \textit{Spectral Theory of Block Operator Matrices and Applications}, Imperial College Press, 2008.
\end{thebibliography}
\end{document}